\DeclareFontFamily{OMS}{rsfs}{\skewchar\font'60}
\DeclareFontShape{OMS}{rsfs}{m}{n}{<-5>rsfs5 <5-7>rsfs7 <7->rsfs10 }{}
\DeclareSymbolFont{rsfs}{OMS}{rsfs}{m}{n}
\DeclareSymbolFontAlphabet{\scr}{rsfs}
\newtheorem{theorem}{Theorem}[section]
\newtheorem*{theorem*}{Theorem}
\newtheorem*{mainresult}{Main Result}
\newtheorem{lemma}[theorem]{Lemma}
\newtheorem{proposition}[theorem]{Proposition}
\newtheorem{corollary}[theorem]{Corollary}
\newtheorem*{conjecture*}{Conjecture}
\theoremstyle{definition}
\newtheorem{definition}[theorem]{Definition}
\theoremstyle{remark}
\newtheorem{remark}[theorem]{Remark}
\newtheorem{question}[theorem]{Question}
\newtheorem*{question*}{Question}
\newcommand{\blank}{\underline{\hskip 10pt}}
\newcommand{\m}{\scr M}
\newcommand{\n}{\scr N}
\newcommand{\N}{\mathbb {N}}
\newcommand{\Z}{\mathbb {Z}}
\newcommand{\Q}{\mathbb {Q}}
\newcommand{\tensor}{\otimes}
\DeclareMathOperator{\Frac}{{Frac}}
\DeclareMathOperator{\Ann}{{Ann}}
\DeclareMathOperator{\coker}{{coker}}
\DeclareMathOperator{\rank}{{rank}}
\DeclareMathOperator{\Spec}{{Spec}}
\DeclareMathOperator{\Hom}{Hom}
\def\spec#1.#2.{{\bold S\bold p\bold e\bold c}_{#1}#2}
\def\proj#1.#2.{{\bold P\bold r\bold o\bold j}_{#1}\sum #2}
\def\ring#1.{\scr O_{#1}}
\def\map#1.#2.{#1 \to #2}
\def\longmap#1.#2.{#1 \longrightarrow #2}
\def\factor#1.#2.{\left. \raise 2pt\hbox{$#1$} \right/
\hskip -2pt\raise -2pt\hbox{$#2$}}
\def\pe#1.{\mathbb P(#1)}
\def\pr#1.{\mathbb P^{#1}}
\def\coh#1.#2.#3.{H^{#1}(#2,#3)}
\def\dimcoh#1.#2.#3.{h^{#1}(#2,#3)}
\def\hypcoh#1.#2.#3.{\mathbb H_{\vphantom{l}}^{#1}(#2,#3)}
\def\loccoh#1.#2.#3.#4.{H^{#1}_{#2}(#3,#4)}
\def\dimloccoh#1.#2.#3.#4.{h^{#1}_{#2}(#3,#4)}
\def\lochypcoh#1.#2.#3.#4.{\mathbb H^{#1}_{#2}(#3,#4)}
\def\ses#1.#2.#3.{0  \longrightarrow  #1   \longrightarrow
 #2 \longrightarrow #3 \longrightarrow 0}
\def\sesshort#1.#2.#3.{0
 \rightarrow #1 \rightarrow #2 \rightarrow #3 \rightarrow 0}
\def\iff#1#2#3{
    \hfil\hbox{\hsize =#1 \vtop{\noin #2} \hskip.5cm
    \lower.5\baselineskip\hbox{$\Leftrightarrow$}\hskip.5cm
    \vtop{\noin #3}}\hfil\medskip}
\def\myoplus#1.#2.{\underset #1 \to {\overset #2 \to \oplus}}
\renewcommand{\:}{\colon}
\newcommand{\cf}{{\itshape cf.} }
\renewcommand{\m}{\mathfrak{m}}
\DeclareMathOperator{\length}{\ell}
\begin{document}

\title[$F$-signature exists]{\LARGE $F$-Signature exists}

\author{Kevin Tucker}

\thanks{This material is based upon work supported by the National
  Science Foundation under Award No. 1004344.}
\subjclass[2000]{14B05, 13A35}

\maketitle
\tableofcontents

\spacing{1.2}

\begin{abstract}
  Suppose $R$ is a $d$-dimensional reduced \mbox{$F$-finite} Noetherian local ring with
  prime characteristic \mbox{$p > 0$} and perfect residue field.  Let $R^{1/p^e}$ be the
  ring of $p^e$-th roots of elements of $R$ for $e \in \N$, and let
  $a_{e}$ denote the maximal rank of a free $R$-module appearing in a
  direct sum decomposition
  of $R^{1/p^{e}}$.  We show the existence of the limit $s(R) := \lim_{e \to \infty} \frac{a_{e}}{p^{ed}}$, called the 
  \mbox{$F$-signature} of $R$.  This invariant -- which can be
  extended to all local rings in prime characteristic -- was first formally
  defined by C. Huneke and G. Leuschke
  \cite{HunekeLeuschkeTwoTheoremsAboutMaximal} 
and has previously been shown to exist only in special cases.
The proof of our main result is based on the development
  of certain uniform
  Hilbert-Kunz estimates of independent interest.  Additionally, we
  analyze the behavior of the $F$-signature under finite ring
  extensions and recover explicit formulae for the \mbox{$F$-signatures} of
arbitrary finite quotient singularities
\end{abstract}


\section{Introduction}
\label{sec:introduction}

Every ring $R$ with prime characteristic $p > 0$ comes endowed with a
Frobenius or \mbox{$p$-th} power endomorphism.   
The existence of an $R$-module section of Frobenius, called an
 \mbox{\emph{$F$-splitting}}, has strong algebraic and geometric
consequences.
Historically, $F$-splittings have featured prominently
throughout diverse fields of mathematics, 
and applications of these techniques include the well-known theorems of M. Hochster and
J. L. Roberts \cite{HochsterRobertsRingsOfInvariants} together with
numerous results in representation theory
\cite{BrionKumarFrobeniusSplitting}.  In this paper, we
answer an important question which has remained open for over a decade by showing the
existence of a local numerical invariant -- the $F$-signature -- which
roughly
characterizes the asymptotic growth of the number of splittings of
the iterates of Frobenius.

More precisely, let $R$ be complete $d$-dimensional reduced Noetherian local ring with prime characteristic
$p > 0$ and perfect residue field $k = k^{p}$.  For  $e \in \N$ the
inclusion $R \subseteq R^{1/p^{e}}$ into the corresponding ring of
$p^{e}$-th roots of elements of $R$ is naturally identified with the
$e$-th iterate of the Frobenius endomorphism.
Let $a_{e}$
denote the largest rank of a free \mbox{$R$-module} appearing in a direct sum
decomposition 
of $R^{1/p^{e}}$.  In other words, we may write $R^{1/p^{e}} =
R^{ \oplus a_{e}} \oplus M_{e}$ as $R$-modules where $M_{e}$ has no free
direct summands.
The number $a_{e}$ is called the \emph{$e$-th Frobenius splitting
  number} of $R$, and collectively these numbers
encode
subtle information about the action of the Frobenius endomorphism on
$R$.   

The primary goal of this paper is to analyze the asymptotic
behavior of the sequence $\{ a_{e} \}_{e \in \N}$ by showing the existence of the limit
$s(R) = \lim_{e \to \infty} \frac{a_{e}}{p^{ed}}$,
called the \emph{$F$-signature~of~$R$}.
\begin{mainresult}[Theorem~\ref{thm:mainthm}] The $F$-signature $s(R)
  := \lim_{e \to \infty} \frac{a_{e}}{p^{ed}}$ exists.
\end{mainresult}
\noindent
This invariant was first formally defined by C. Huneke and G. Leuschke
\cite{HunekeLeuschkeTwoTheoremsAboutMaximal} and captures delicate information about the
singularities of $R$. For example, the $F$-signature of 
the two-dimensional rational double-points\footnote{Here it is
  necessary to assume that $p \geq 7$ to avoid pathologies in low characteristic.} ($A_{n}$), ($D_{n}$),
($E_{6}$), ($E_{7}$), ($E_{8}$) is the reciprocal of the order of the group
defining the corresponding quotient singularity \cite[Example 18]{HunekeLeuschkeTwoTheoremsAboutMaximal}.
We recover herein similar formulae for the \mbox{$F$-signatures} of
arbitrary finite quotient singularities (Corollary~\ref{cor:quotsing};
\textit{c.f.} \cite[Remark 4.7]{YaoObservationsAboutTheFSignature}).

It is quite natural to expect the 
\mbox{$F$-signature} to measure the singularities of $R$.  Indeed,
when $R$ is regular, $R^{1/p^{e}}$ itself is a free $R$-module of rank
$p^{ed}$.  Thus, for general $R$, the \mbox{$F$-signature} asymptotically
compares the number of direct summands of $R^{1/p^{e}}$ isomorphic to $R$ with
the number of such summands one would expect from a regular local ring
of the same dimension.  We will see (\cf Theorem~\ref{thm:bigsmeansregular}) that $s(R) \leq 1$ with equality
if and only if $R$ is regular; furthermore, assuming its existence, 
it has been shown by I. Aberbach
and G. Leuschke \cite{AberbachLeuschke} that the \mbox{$F$-signature} is positive if and only if $R$
is strongly $F$-regular.

In light of lingering doubts concerning existence,  previous
research on the \mbox{$F$-signature} has largely been done through the
use of
so-called lower and upper \mbox{$F$-signatures}.  These
are given by $s^{-}(R) :=\liminf_{e \to \infty}
\frac{a_{e}}{p^{ed}}$ and $s^{+}(R) := \limsup_{e \to
  \infty} \frac{a_{e}}{p^{ed}}$, respectively.  Thus, the goal of this paper is simply to
show the equality $s^{-}(R) = s^{+}(R)$ of the lower and upper
\mbox{$F$-signatures} in full generality.

Historically, the $F$-signature first appeared implicitly
in the work of K. Smith and M. Van den Bergh
\cite{SmithVanDenBerghSimplicityOfDiff}.  However, since the beginning
of its formal study in \cite{HunekeLeuschkeTwoTheoremsAboutMaximal},
the existence of the  \mbox{$F$-signature} limit has 
been shown only in special cases.  These include Gorenstein local
rings \cite{HunekeLeuschkeTwoTheoremsAboutMaximal}, local rings that are
Gorenstein on the punctured spectrum \cite{YaoObservationsAboutTheFSignature}, affine
semigroup rings \cite{SinghFSignatureOfAffineSemigroup}, general
$\N$-graded rings \cite{AberbachEnescuWhenDoesTheFSignatureExist}, and
local rings that are
$\Q$-Gorenstein on the punctured spectrum \cite{AberbachEnescuWhenDoesTheFSignatureExist}.
Most recently, I. Aberbach \cite{AberbachExistenceOfTheFSignature} uses certain degree bounds on local cohomology
modules\footnote{These bounds are known to hold for rings which are essentially of
finite type over a field \cite{VraciuLocalCohomofFrobImages}.} to treat the case of rings whose
\mbox{non-$\Q$-Gorenstein} locus has dimension at most one.
It should be noted that the proof given
herein will use rather elementary techniques in comparison.

\nocite{AberbachExistenceOfTheFSignature,AberbachEnescuWhenDoesTheFSignatureExist,AberbachEnescuStructureOfFPure,AberbachLeuschke,YaoObservationsAboutTheFSignature,SinghFSignatureOfAffineSemigroup,EnescuYao,YaoModuleswithfiniteFrepresentationtype}

Let us sketch the proof of the existence of the \mbox{$F$-signature}.  Recall
that, according to
the famous result of P. Monsky \cite{MonskyHKFunction}, for any $\m$-primary ideal $I =
\langle x_{1}, \ldots, x_{r} \rangle$ we may
define the
\emph{Hilbert-Kunz multiplicity of $I$ along R}
\[
e_{HK}(I;R) := \lim_{e\to\infty} \frac{1}{p^{ed}} \length(R/I^{[p^{e}]})
\]
where $\length(\blank)$ denotes the length function on Artinian $R$-modules and
$I^{[p^{e}]} := \langle x_{1}^{p^{e}}, \ldots, x_{r}^{p^{e}}
\rangle$. In Section~\ref{sec:HK-mult}, we give a variant on the
original proof of the existence of Hilbert-Kunz multiplicity which
carefully tracks certain uniform estimates.  The most important of
these is the following (Proposition~\ref{prop:keyproposition}):
for any $d$-dimensional reduced \mbox{$F$-finite} ring $(R, \m, k)$,
there exists a positive constant $C$ such that for all $e, e' \in \N$
and every ideal $I$ of $R$ containing $\m^{[p^{e}]}$ we have
\[
\left| \length(R/I) - \frac{1}{p^{e'd}}\length(R/I^{[p^{e'}]})\right|
\leq Cp^{e(d-1)} \, \, .
\]

In Section~\ref{sec:f-signature}, building on the works of Y. Yao \cite{YaoObservationsAboutTheFSignature} as well as F. Enescu and
I. Aberbach \cite{AberbachEnescuStructureOfFPure}, for each fixed $e
\in \N$ we consider the ideal
\[
I_{e} = \{ r \in R \, | \, \phi(r^{1/p^{e}}) \in \m \mbox{ for all } \phi \in
\Hom_{R}(R^{1/p^{e}}, R) \} \, \, .
\]
It is easy to see that $\m^{[p^{e}]} \subseteq I_{e}$, so it follows
(Corollary~\ref{cor:whatineed}) by the uniform estimate above that
\[
\lim_{e \to \infty} \left( \frac{1}{p^{ed}} \left(\length(R/I_{e}) -
    e_{HK}(I_{e};R)\right)\right) = 0 \, \, .
\]
Since one can show 
$a_{e} = \length(R/I_{e})$
for all $e \in \N$, to prove the existence of the \mbox{$F$-signature} it suffices to show the sequence $\{ \frac{1}{p^{ed}} e_{HK}(I_{e};R)
\}_{e\in \N}$ approaches a limit. This follows by noting that $I_{e}^{[p]} \subseteq
I_{e+1}$ for all $e \in \N$, and thus 
$\{ \frac{1}{p^{ed}} e_{HK}(I_{e};R)
\}_{e\in \N}$
 is non-increasing (and bounded below by zero).  Note that, in comparison to 
 previous existence arguments, we do not realize the
 \mbox{$F$-signature} as a relative Hilbert-Kunz multiplicity
 (\cf Remark~\ref{rmk:minrelHKmult});
 rather, it is a limit of (appropriately scaled) Hilbert-Kunz
 multiplicities of a sequence of naturally defined ideals.

In his work \cite{YaoObservationsAboutTheFSignature}, Y. Yao has
generalized the \mbox{$F$-signature} to arbitrary local rings $(R, \m, k)$ in prime
characteristic $p > 0$ without the simplifying assumptions that $R$ is
complete and the residue field $k$ is perfect.
The existence of the \mbox{$F$-signature} in full generality, however,
immediately reduces to the case of \mbox{$F$-finite} local rings
originally considered in \cite{HunekeLeuschkeTwoTheoremsAboutMaximal} (\cf Remark~\ref{rmk:notFfinite}). 
As such, we have
largely restricted ourselves to the \mbox{$F$-finite} setting
throughout.  Under this hypothesis,  one incorporates $\alpha(R) = \log_{p} [k:k^{p}]$ into the definition
$\lim_{e \to \infty}
\frac{a_{e}}{p^{e(d+\alpha(R))}}$ of the $F$-signature when the residue field $k$ is not
perfect \cite{AberbachLeuschke}.

It has been noted that in almost all previously known
cases of the existence of \mbox{$F$-signature}, the conjectured
equivalence of strong and weak
$F$-regularity holds.  This observation has lead to
much speculation concerning a connection between this conjecture and the existence of 
\mbox{$F$-signature};
however, we are as of yet unaware of an application of our results or
methods in this direction (\cf Remark \ref{rmk:minrelHKmult}).

Much of the study of \mbox{$F$-signature} to date has focused simply on the
existence of this invariant.  With this chapter closed, however, we
would argue that the subject is now ripe for new exploration.  It is our hope that  this is but another beginning in the use of
\mbox{$F$-signature} to better understand local rings in
positive characteristic. 

The author would like to thank Manuel Blickle and Karl Schwede for
discussions and encouragement related to this article, as well as
Craig Huneke for sharing an insightful observation (\cf Remark~\ref{rem:duttacraig}) after seeing a
preliminary draft.

\section{Background and notation}
\label{sec:background-notation}

Throughout this paper, we shall assume all rings are commutative with
a unit, 
Noetherian, and have
prime characteristic $p > 0$.  A
local ring is a triple $(R, \m, k)$ where $\m$ is the unique maximal
ideal of the ring $R$
and $k = R/\m$ is the corresponding residue field.  The Frobenius
or $p$-th power endomorphism $F \: R \to R$ is defined by $r \mapsto
r^{p}$ for all $r \in R$.  Similarly, for $e \in \N$, we have $F^{e}
\: R \to R$ given by $r \mapsto r^{p^{e}}$.

Let $M$ be an $R$-module.  For any $e \in \N$, viewing $M$ as an
$R$-module via restriction of scalars under $F^{e}$ yields an
$R$-module we  denote by
$F^{e}_{*}M$.  Thus, $F^{e}_{*} M$ agrees with $M$ as an
abelian group, and if $m \in M$ we set $F^{e}_{*}m$ to be the
corresponding element of $F^{e}_{*}M$.  Furthermore, for $r \in R$ it
follows that $r 
(F^{e}_{*}m) = F^{e}_{*}(r^{p^{e}}  m)$.  Note that $F^{e}_{*}R$
inherits the structure of a ring abstractly isomorphic to $R$, and $F^{e}_{*}M$ is naturally an
$F^{e}_{*}R$-module for any $R$-module $M$. 

We have that $F^{e}_{*}R$
is an $R$-algebra via the homomorphism of $R$-modules $F^{e} \: R \to
F^{e}_{*}R$ given by $r
\mapsto F^{e}_{*} r^{p^{e}}$ for $r \in R$, which is but another
perspective on the $e$-th iterate of Frobenius.
In case $R$ is reduced, we may
identify $F^{e}_{*}R$ with the $R$-module $R^{1/p^{e}}$ of $p^{e}$-th
roots of $R$ by associating  $F^{e}_{*}r$ and $r^{1/p^{e}}$; the $e$-iterated Frobenius homomorphism
now takes on the
guise of the natural inclusion $R \subseteq R^{1/p^{e}}$.  Each point
of view has certain advantages, and we will
switch between them as the situation warrants throughout.

\begin{definition}
  Suppose $(R, \m, k)$ is a local ring of characteristic $p > 0$.  We
  say $R$ is \mbox{\emph{$F$-finite}} if $F_{*}R$ is finitely generated as an
  $R$-module, from which it follows that $F^{e}_{*}R$ is finitely
  generated for all $e \in \N$.  In this case, we set $\alpha(R) = \log_{p}[k:k^{p}]$.
\end{definition}

\noindent
Note that any local ring which is essentially of finite type over a
perfect field is \mbox{$F$-finite}. Additionally, as used for
simplicity in the introduction, a complete local
ring with \mbox{$F$-finite} residue field is
automatically \mbox{$F$-finite}.

Denote by $\length_{R}(M)$ the length of a finitely generated
Artinian $R$-module $M$.  If $R$ is \mbox{$F$-finite} and $e \in \N$, it is easy to
see that
\[
\length_{R}(M) = \length_{F^{e}_{*}R}(F^{e}_{*}M)  \qquad
\length_{R}(F^{e}_{*}M) = p^{e\alpha(R)}\length_{R}(M)
\]
by using that $F^{e}_{*}( \blank )$ is an exact functor and
$[(F^{e}_{*}k \simeq k^{1/p^{e}}):k] = p^{e\alpha(R)}$.  The following Theorems of Kunz shall
be used repeatedly.  A secondary reference for these
Theorems can be found in the appendix to \cite{MatsumuraCommutativeAlgebra}.

\begin{theorem}(Kunz's Theorems)
\label{thm:kunz}  Let $(R, \m, k)$ be a Noetherian local ring of dimension $d$ and characteristic $p >
  0$.
  \begin{enumerate}[(i)]
  \item \cite{KunzCharacterizationsOfRegularLocalRings} For any $e \in
    \N$, we have $\length(R/\m^{[p]}) \geq p^{ed}$
    and equality holds if and only if $R$ is regular.  If $R$ is
    \mbox{$F$-finite}, then $R^{1/p^{e}}$ is a free module of rank
    $p^{e(d+\alpha(R))}$ if and only if $R$ is regular.
  \item \cite{KunzOnNoetherianRingsOfCharP}\footnote{We caution the
      reader that Kunz states a nearby and related result in more generality
      than his proof justifies.  See \cite[page 4]{EnescuYao} for
      further details (\cf \cite{ShepherdBarronOnaProblemofKunz}).} If $R$ is \mbox{$F$-finite}, then $R$ is excellent and $\alpha(R_{P})
    = \alpha(R_{Q}) + \dim(R_{Q} / PR_{Q})$ for any two prime ideals
    $P \subseteq Q$ of $R$.
  \end{enumerate}
\end{theorem}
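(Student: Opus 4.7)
The plan for part (i) is to rephrase the length inequality as a generator count for the $R$-module $F^e_* R$. Under the F-finite hypothesis, I first establish
\[
\mu_R(F^e_* R) \;=\; \dim_k\!\bigl(F^e_* R / \m\, F^e_* R\bigr) \;=\; p^{e\alpha(R)}\,\length_R(R/\m^{[p^e]}),
\]
using that $\m F^e_* R$ corresponds under the identification $F^e_* R \cong R^{1/p^e}$ to the ideal $\m^{[p^e]}$, combined with the length-shifting rule $\length_R(F^e_* M) = p^{e\alpha(R)}\length_R(M)$ already noted in the excerpt. Next I would show that $R^{1/p^e}$ has generic rank $p^{e(d+\alpha(R))}$ at a minimal prime of $R$ of maximal dimension, via a residue-field analysis combining a separating transcendence basis with a $p$-basis and computing $[\kappa(\mathfrak{p})^{1/p^e} : \kappa(\mathfrak{p})]$. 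Since the minimal number of generators of a finitely generated module bounds its generic rank from above, this yields $\length_R(R/\m^{[p^e]}) \geq p^{ed}$. The non-F-finite case then follows by completion combined with a base change enlarging the residue field to make the resulting ring F-finite while preserving the relevant lengths.

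For the regularity equivalence in (i), when $R$ is regular and F-finite the homological criterion for regularity (finiteness of the projective dimension of $k$) forces Frobenius to be flat; then $R^{1/p^e}$, being a finitely generated flat module over a local ring, is free, and its rank equals the generic rank $p^{e(d+\alpha(R))}$, so equality holds in the length bound. Conversely, equality forces $\mu_R(F^e_* R)$ to agree with the generic rank of $R^{1/p^e}$, and since this module is torsion-free over the reduced ring $R$, the agreement produces freeness. Kunz's homological argument then concludes that $R$ is regular from the flatness of Frobenius.

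For part (ii) I would split the statement into two independent pieces. The formula $\alpha(R_P) = \alpha(R_Q) + \dim(R_Q/PR_Q)$ reduces to a manipulation of $p$-basis degrees along the chain of residue fields: $\alpha(R_P)$ measures the logarithmic imperfection degree of $\kappa(P)$, its difference with $\alpha(R_Q)$ equals the transcendence degree of $\kappa(P)$ over $\kappa(Q)$, and in a catenary ring this matches $\dim(R_Q/PR_Q)$. For excellence, I would verify Grothendieck's criterion: universal catenarity is immediate from F-finite rings being homomorphic images of regular rings via Cohen's structure theorem, and geometric regularity of the formal fibers is extracted from the compatibility of Frobenius with completion using the regularity characterization in part (i). The main obstacle is precisely this last point — establishing regularity of all formal fibers requires a careful argument tracking how F-finiteness transfers through the completion map and interacts with the jacobian criterion, and this is the genuine content of Kunz's second theorem that resists any purely formal simplification.
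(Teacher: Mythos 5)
The paper does not actually supply a proof of Theorem~\ref{thm:kunz}. It is stated as classical background, with citations to Kunz's two original papers and the appendix to Matsumura's \emph{Commutative Algebra} as a secondary reference, plus a footnote caution referring to Enescu--Yao and Shepherd-Barron about the precise scope of the second theorem. So there is no in-paper proof against which to match your attempt; it can only be measured against the standard literature.

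As a sketch your outline is broadly consonant with modern treatments, but a few steps are not as automatic as you make them sound. First, there is a hidden circularity: your computation of the ``generic rank'' $[\kappa(\mathfrak{p})^{1/p^e}:\kappa(\mathfrak{p})] = p^{e(d+\alpha(R))}$ at a minimal prime $\mathfrak{p}$ of maximal dimension is exactly the $P\subseteq Q$ formula of part~(ii) specialized to $P$ minimal and $Q = \m$; you would need to establish at least that special case independently (which is doable by a direct $p$-basis/transcendence-degree argument, but it should be flagged). Second, the step ``$\mu_R(F^e_*R)$ equals the generic rank and the module is torsion-free, hence free'' is delicate outside the domain case: for a reduced non-domain $R$ one has to compare ranks across all minimal primes (including possible lower-dimensional ones) before concluding freeness, and in the non-reduced case one must first argue that equality forces reducedness. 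Kunz's original argument, and the Matsumura appendix, handle these points explicitly. Third, in part~(ii) the claim that ``universal catenarity is immediate from $F$-finite rings being homomorphic images of regular rings via Cohen's structure theorem'' is not right as stated: Cohen's theorem applies to complete local rings, not $F$-finite ones, and the fact that an $F$-finite Noetherian ring is a quotient of a regular ring is a genuinely nontrivial theorem of Gabber that postdates Kunz; Kunz's original route to excellence does not go through it. You do correctly identify the regularity of formal fibers as the essential content, and you correctly and silently fix the evident typo in the statement (the bound should read $\length(R/\m^{[p^e]}) \geq p^{ed}$, not $\m^{[p]}$).

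Since the paper simply quotes the result, the most useful improvement would be to either cite the standard sources for the missing steps or be explicit that the sketch presupposes the localization formula for $\alpha$ and the reduction to the domain case.
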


We close this section by recalling the definitions of $F$-purity and
strong $F$-regularity for \mbox{$F$-finite} local rings.  Though first
introduced in \cite{HochsterRobertsFrobeniusLocalCohomology} and
\cite{HochsterHunekeTightClosureAndStrongFRegularity}, respectively,
both concepts have been studied by many authors.  As little of
the theory will be used in subsequent sections, we content
ourselves to recall a few standard facts without proofs or references.
The interested reader is invited to see \cite{HunekeTightClosureBook}
or \cite{HochsterFoundations} for further information.

\begin{definition}
  Suppose $(R, \m, k)$ is an \mbox{$F$-finite} local ring with prime characteristic
  $p>0$. 
\begin{itemize}
\item We say $R$ is \emph{$F$-pure} or\footnote{While technically speaking
    $F$-purity is an a priori weaker condition than $F$-splitting, the
  two notions coincide for $F$-finite rings; see \cite{HochsterRobertsFrobeniusLocalCohomology}.} \emph{$F$-split} if
the Frobenius homomorphism $F \: R
  \to F_{*}R$ splits as a map of $R$-modules.  In other words, there exists $\phi \in \Hom_{R}(F_{*}R,R)$ such that $\phi \circ
    F = \mathrm{Id}_{R} \in \Hom_{R}(R,R)$.  In case $R$ is $F$-pure, it is
    automatically reduced and weakly normal, and the $\m$-adic
    completion $\widehat{R}$ is also $F$-pure.
  \item Let $R^{0}$ be the complement of the minimal primes of $R$.
    We say $R$ is
  \emph{strongly \mbox{$F$-regular}} if
for every $c \in R^{0}$ there exists an $e \geq 0$ and some
    $\phi \in \Hom_{R}(F^{e}_{*}R, R)$ such that $\phi(F^{e}_{*}c) =
    1$.  In other words
the $R$-linear map $R \to F^{e}_{*}R$ which sends $1$ to $F^{e}_{*}c$
splits over $R$.  In case $R$ is strongly $F$-regular, it is a
Cohen-Macaulay normal domain, and the $\m$-adic completion
$\widehat{R}$ is also strongly $F$-regular.
  \end{itemize}
\end{definition}

\begin{remark}
The notions of $F$-purity and $F$-regularity play a prominent role in the celebrated
theory of tight closure introduced by M. Hochster and C. Huneke; see 
  \cite{HochsterHunekeTC1} for a first glance at this beautiful subject.
They have conjectured that all ideals of $R$ are tightly closed if and
only if $R$ is strongly $F$-regular.  This is known to be true 
when $R$ is an excellent $\Q$-Gorenstein normal local ring
(\cite{AberbachMacCrimmonSomeResultsOnTestElements}; \cf
\cite{LyubeznikSmithStrongWeakFregularityEquivalentforGraded,LyubeznikSmithCommutationOfTestIdealWithLocalization} and
\cite[Theorem 1.13]{HaraYoshidaGeneralizationOfTightClosure}).
\end{remark}

\section{Uniform Hilbert-Kunz estimates}
\label{sec:HK-mult}

Our goal in this section is to revisit the proof
of a famous
result of P. Monsky.  For an ideal $I$ of a local ring $(R,\m,k)$ and $e
\in \N$, recall
that $I^{[p^{e}]}$ denotes the ideal $\langle x^{p^{e}} \, | \, x \in
I \rangle$.  Furthermore, we have 
\[R/I \tensor_{R} F^{e}_{*}R =
F^{e}_{*}R/I\, F^{e}_{*}R = F^{e}_{*}R/F^{e}_{*}I^{[p^{e}]} =
F^{e}_{*}(R/I^{[p^{e}]})\] by the definition of the action of $R$ on $F^{e}_{*}R$. 

\begin{theorem} \cite{MonskyHKFunction}
\label{thm:hkexists}
Suppose $(R,\m,k)$ is a local ring of dimension
$d$ and characteristic $p>0$.  If $I$ is any $\m$-primary ideal and
$M$ is a finitely generated $R$-module, then
the limit 
\[
e_{HK}(I;M) := \lim_{e\to\infty}\frac{1}{p^{ed}} \length_{R}(R/I^{[p^{e}]} \tensor_{R}M)
\]
exists and is called the \emph{Hilbert-Kunz multiplicity of $M$ along $I$}.
\end{theorem}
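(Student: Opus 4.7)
The plan is to prove existence of the limit by induction on $d$, establishing the stronger Cauchy-type estimate
\[
\left|\frac{\length(R/I^{[p^{e+1}]}\tensor_R M)}{p^{(e+1)d}} - \frac{\length(R/I^{[p^{e}]}\tensor_R M)}{p^{ed}}\right| \leq \frac{C}{p^{e+d}}
\]
with a constant $C$ depending only on $R$ and $M$. This implies both convergence and a geometric rate, and is a model for the sharper uniform bounds that the paper intends to develop.

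First I would carry out standard reductions. Completing $R$ preserves the relevant lengths of Artinian modules, and a further faithfully flat extension of the residue field reduces to the case that $R$ is $F$-finite. A prime filtration of $M$ then reduces to $M = R/P$ for a prime $P$, with the short-exact-sequence error terms controlled by subquotients of strictly smaller dimension and hence absorbed by the inductive hypothesis. Killing $P$, one arrives at the central case: $R$ a complete $F$-finite local domain of dimension $d$ with $M = R$.

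The key input is the short exact sequence of $R$-modules
\[
0 \longrightarrow R^{\oplus r} \longrightarrow F_*R \longrightarrow N \longrightarrow 0,
\]
where $r = p^{d+\alpha(R)}$ is the generic rank of $F_*R$ over $R$, so that $\dim N \leq d-1$. Tensoring with $R/I^{[p^e]}$, using the identification $F_*R \tensor_R R/I^{[p^e]} = F_*(R/I^{[p^{e+1}]})$ together with $\length_R(F_*(\blank)) = p^{\alpha(R)}\length_R(\blank)$, and taking the alternating sum of lengths in the resulting four-term exact sequence yields
\[
p^d\length(R/I^{[p^e]}) - \length(R/I^{[p^{e+1}]}) = p^{-\alpha(R)}\bigl(\length(\Tor_1^R(N, R/I^{[p^e]})) - \length(N/I^{[p^e]}N)\bigr).
\]
The inductive hypothesis immediately bounds $\length(N/I^{[p^e]}N)$ by $O(p^{e(d-1)})$, since $\dim N \leq d-1$. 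For the $\Tor$ term, a first syzygy $0 \to N_1 \to R^{\oplus b_0} \to N \to 0$ gives $\length(\Tor_1^R(N,R/I^{[p^e]})) = \length(N_1/I^{[p^e]}N_1) - b_0\length(R/I^{[p^e]}) + \length(N/I^{[p^e]}N)$; because $N_1$ and $R^{\oplus b_0}$ have equal generic rank, the leading-order $p^{ed}$-terms cancel, and a parallel inductive rank-comparison estimate bounds the difference by $O(p^{e(d-1)})$.

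Dividing the recursion by $p^{(e+1)d}$ yields the desired Cauchy bound $|a_{e+1} - a_e| \leq Cp^{-e-d}$ with $a_e = p^{-ed}\length(R/I^{[p^e]}\tensor_R M)$, proving convergence. The main obstacle—precisely what Section~\ref{sec:HK-mult} is designed to address in strengthened form via Proposition~\ref{prop:keyproposition}—is ensuring that $C$ can be made uniform across the natural family of ideals $I$ containing a fixed $\m^{[p^e]}$, rather than depending on each $I$ separately. Threading the induction so that both the Hilbert-Kunz estimate on $N$ and the rank-comparison estimate controlling the $\Tor$ term retain uniform constants throughout is the delicate technical heart of the argument.
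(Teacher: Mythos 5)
Your argument is essentially Monsky's original proof, which is exactly what the paper says it is presenting a ``slight variant'' of, so the approaches are close; the substantive content in both cases is the comparison of $F_*R$ with a free module of rank $p^{d+\alpha(R)}$, with the error pushed into modules of dimension at most $d-1$. The two main presentational differences are worth noting. First, you reduce to $M=R$ over a domain via a prime filtration and then work with a short exact sequence $0\to R^{\oplus r}\to F_*R\to N\to 0$ and its $\Tor$ long exact sequence; the paper instead phrases the key estimate (Lemma~\ref{lem:second}) as a one-sided comparison via a map $\phi\colon M\to N$ that is an isomorphism at the top-dimensional minimal primes, bounding only $\coker(\phi)$ and then symmetrizing. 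That formulation deliberately sidesteps $\Tor$ bookkeeping and lets the paper work with a general reduced $F$-finite $R$ and arbitrary finitely generated $M$ directly in Proposition~\ref{prop:keyproposition}, with no prime filtration needed. Second, you compare consecutive exponents $e$ and $e+1$ to get a Cauchy estimate with geometric rate; the paper instead compares $e$ with $e+e'$ for \emph{all} $e'$ at once, uniformly over ideals $I\supseteq\m^{[p^e]}$, which is precisely the strengthening needed in Section~\ref{sec:f-signature}. You correctly flag this uniformity as the delicate point.

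There is one small but genuine slip in the central computation. Tensoring $0\to R^{\oplus r}\to F_*R\to N\to 0$ with $R/I^{[p^e]}$ does \emph{not} yield a four-term exact sequence: since $F_*R$ is not flat, the $\Tor$ long exact sequence gives
\[
0\to\Tor_1^R(F_*R,R/I^{[p^e]})\to\Tor_1^R(N,R/I^{[p^e]})\to R^{\oplus r}/I^{[p^e]}R^{\oplus r}\to F_*(R/I^{[p^{e+1}]})\to N/I^{[p^e]}N\to 0,
\]
a five-term sequence, and your displayed equality omits the $\length(\Tor_1^R(F_*R,R/I^{[p^e]}))$ contribution. This is easily repaired: either observe that $\Tor_1^R(F_*R,R/I^{[p^e]})$ injects into $\Tor_1^R(N,R/I^{[p^e]})$ so the extra term is absorbed by the bound you already need, or (cleaner) embed $F_*R$ into a free module of the same generic rank, $0\to F_*R\to R^{\oplus r}\to N'\to 0$, so that the resulting sequence really is four-term because the $\Tor_1$ of the free middle term vanishes. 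With that correction the recursion and the Cauchy bound $|a_{e+1}-a_e|\leq Cp^{-e-d}$ go through as you describe.
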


The proof of Theorem~\ref{thm:hkexists} given here is but a slight variant
of Monsky's original proof.  However, in the process, we will recover
certain uniform approximation statements (see Theorem~\ref{thm:approximation}) which will be essential in
showing the existence of the \mbox{$F$-signature}.  
We begin with a pair of rather elementary and well-known
lemmas. 

\begin{lemma}
\label{lem:lowerdim}
  Suppose $(R,\m,k)$ is a local ring of characteristic $p>0$.  If 
  $M$ is a finitely generated $R$-module, then
  there exists a positive constant $C$ such that for all $e \in \N$
  and any ideal $I$ of $R$ with $\m^{[p^{e}]}\subseteq I$ we have
\[
\length_{R}(R/I \tensor_{R} M) \leq Cp^{e\dim(M)} \, \, .
\]
\end{lemma}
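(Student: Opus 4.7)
The plan is to first reduce to the ``worst-case'' choice $I = \m^{[p^e]}$. Indeed, any inclusion $\m^{[p^{e}]} \subseteq I$ gives a surjection $M/\m^{[p^{e}]}M \twoheadrightarrow M/IM = R/I \tensor_{R} M$, so $\length_{R}(R/I \tensor_{R} M) \leq \length_{R}(M/\m^{[p^{e}]}M)$. It therefore suffices to produce a constant $C$, depending only on $M$, such that $\length_{R}(M/\m^{[p^{e}]}M) \leq C\, p^{e\dim M}$ for every $e \in \N$.

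Next I would pass to a prime filtration $0 = M_{0} \subset M_{1} \subset \cdots \subset M_{n} = M$ with successive quotients $M_{i}/M_{i-1} \simeq R/P_{i}$ for primes $P_{i} \in \Supp M$; in particular $\dim(R/P_{i}) \leq \dim M$ for each $i$. Right-exactness of $\blank \tensor_{R} R/\m^{[p^{e}]}$ together with subadditivity of length on the induced short exact sequences yields
\[
\length_{R}(M/\m^{[p^{e}]}M) \leq \sum_{i=1}^{n} \length_{R}\!\left(R/(P_{i} + \m^{[p^{e}]})\right),
\]
reducing the problem to the case $M = R$. That is, it is enough to show that for every $d$-dimensional Noetherian local ring $(R,\m,k)$ there exists a constant $C$ with $\length_{R}(R/\m^{[p^{e}]}) \leq C\, p^{ed}$ for all $e \in \N$.

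For this final step I would choose a system of parameters $x_{1},\ldots,x_{d} \in \m$, set $J = (x_{1},\ldots,x_{d})$, and observe that $J \subseteq \m$ gives $J^{[p^{e}]} \subseteq \m^{[p^{e}]}$ and hence $\length_{R}(R/\m^{[p^{e}]}) \leq \length_{R}(R/J^{[p^{e}]})$. A pigeonhole on exponents shows $J^{[p^{e}]} \supseteq J^{d(p^{e}-1)+1}$: any monomial generator $x_{1}^{a_{1}}\cdots x_{d}^{a_{d}}$ with $\sum a_{i} \geq d(p^{e}-1)+1$ must have some $a_{i} \geq p^{e}$. Since $J$ is an $\m$-primary ideal generated by $d$ elements in a local ring of dimension $d$, the Hilbert--Samuel function $n \mapsto \length_{R}(R/J^{n})$ is eventually polynomial of degree $d$, so $\length_{R}(R/J^{n}) \leq C'n^{d}$ for all $n$ (after possibly enlarging $C'$ to absorb the finitely many initial values), and evaluating at $n = d(p^{e}-1)+1 \leq dp^{e}$ delivers the desired bound. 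There is no genuine obstacle here: the entire content is bookkeeping, the point being to arrange the reductions so that the resulting constant depends only on $M$ and not on the choice of $e$ or $I$.
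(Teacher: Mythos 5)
Your proof is correct, but it takes a longer route than the one in the paper. The paper's argument is: after the same reduction to $I = \m^{[p^e]}$, pick generators $y_1,\ldots,y_t$ of $\m$ and observe directly (same pigeonhole on exponents) that $\m^{tp^e} \subseteq \m^{[p^e]}$, so that
\[
\length_R(R/\m^{[p^e]} \tensor_R M) \leq \length_R(M/(\m^t)^{p^e}M),
\]
and the right side is controlled by the Hilbert--Samuel polynomial of $M$ with respect to the $\m$-primary ideal $\m^t$, which has degree $\dim M$. You instead pass through two extra reductions: a prime filtration to reduce to cyclic modules $R/P_i$, and then within each $R/P_i$ a replacement of the maximal ideal by a parameter ideal $J$. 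Both of these are sound (in particular your pigeonhole $J^{d(p^e-1)+1} \subseteq J^{[p^e]}$ and the commuting of bracket powers with quotients are fine), but they are unnecessary: the Hilbert--Samuel function of an arbitrary finitely generated module with respect to an $\m$-primary ideal already grows like $n^{\dim M}$, so there is no need to first reduce to the case $M = R$, and the pigeonhole can be run with all generators of $\m$ rather than a system of parameters. What your version buys is that the final exponent bound is stated for a parameter ideal in a quotient, which is conceptually closer to the ``minimal'' reason the estimate holds; what the paper's version buys is brevity and the avoidance of any filtration argument.
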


\begin{proof}
Since $R/\m^{[p^{e}]} \tensor_{R} M$ surjects onto $R/I
\tensor_{R} M$, it suffices to show the statement for $I = \m^{[p^{e}]}$.
If $\m$ has $t$ generators, then $\m^{tp^{e}} \subseteq \m^{[p^{e}]}$ and
hence
\[
\length_{R}(R/\m^{[p^{e}]} \tensor_{R} M) \leq \length_{R}(R/(\m^{t})^{p^{e}} \tensor_{R}M)
\, \,.
\]  The Hilbert polynomial of $M$ with respect to $\m^{t}$ has degree
$\dim(M)$.  If 
leading coefficient of this polynomial is $c$, it is clear that any $C \gg c$ satisfies the desired bound.
\end{proof}

\begin{lemma}
\label{lem:second}
  Suppose $(R,\m,k)$ is a $d$-dimensional reduced local ring of characteristic $p > 0$.  Let
  $P_{1}, \ldots, P_{m}$ be those minimal primes of $R$ with
  $\dim(R/P_{i}) = d$.  If $M$ and $N$ are finitely
  generated $R$-modules such that $M_{P_{i}} \simeq N_{P_{i}}$ for
  each $i$, then there exists a positive constant $C$ such that for
  all $e \in \N$ and any ideal $I$ of $R$ with $\m^{[p^{e}]}\subseteq I$ we have
\[
\left| \length_{R}( R/I \tensor_{R} M ) - \length_{R}( R/I \tensor_{R} N)
\right| \leq C p^{e(d-1)} \, \, .
\]
\end{lemma}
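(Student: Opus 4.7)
My plan is to deduce the estimate from Lemma~\ref{lem:lowerdim} by comparing $M$ and $N$ through $R$-module homomorphisms that become isomorphisms after localizing at each top-dimensional minimal prime $P_{i}$, and whose kernels and cokernels consequently have dimension strictly less than $d$.

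For the first step I would construct comparison maps. Set $W := R \setminus \bigcup_{i=1}^{m} P_{i}$. Since $R$ is reduced and each $P_{i}$ is minimal, each $R_{P_{i}}$ is a field, and $R_{W}$ is a zero-dimensional reduced semi-local ring with maximal ideals $P_{i}R_{W}$. It follows that $R_{W} \iso \prod_{i=1}^{m} R_{P_{i}}$, and the given isomorphisms $M_{P_{i}} \iso N_{P_{i}}$ patch to an isomorphism $M_{W} \iso N_{W}$. Since $M$ and $N$ are finitely presented, the natural map $\Hom_{R}(M,N) \tensor_{R} R_{W} \to \Hom_{R_{W}}(M_{W}, N_{W})$ is an isomorphism, so there exist $\phi \in \Hom_{R}(M, N)$ and $s \in W$ realizing that isomorphism; as $s$ is a unit in $R_{W}$, the map $\phi_{W}$ is itself an isomorphism, and hence so is each $\phi_{P_{i}}$. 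Running the same construction in the opposite direction produces a second map $\psi \: N \to M$ with $\psi_{P_{i}}$ an isomorphism for every $i$.

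Next I would check that $\ker \phi$, $\coker \phi$, $\ker \psi$, and $\coker \psi$ all have dimension less than $d$. Any prime $Q$ in the support of such a module must avoid $\{P_{1}, \ldots, P_{m}\}$ while still containing some minimal prime of $R$: either this containment is strict, so that $\dim R/Q < \dim R/P_{i} = d$, or else $Q$ contains a minimal prime $P_{j}$ with $\dim R/P_{j} < d$ to begin with, whence $\dim R/Q \leq \dim R/P_{j} < d$.

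Finally I would extract the length estimate. Tensoring the short exact sequence $0 \to \im \phi \to N \to \coker \phi \to 0$ with $R/I$ gives a right-exact sequence, and combining with the surjection $M/IM \twoheadrightarrow \im \phi / I \im \phi$ induced by $M \twoheadrightarrow \im \phi$ yields
$$\length_{R}( R/I \tensor_{R} N ) \leq \length_{R}( R/I \tensor_{R} M ) + \length_{R}( R/I \tensor_{R} \coker \phi ).$$
The symmetric argument with $\psi$ in place of $\phi$ gives the reverse bound in terms of $\length_{R}(R/I \tensor_{R} \coker \psi)$. Lemma~\ref{lem:lowerdim} then supplies a uniform $p^{e(d-1)}$ bound on each cokernel length, and the absolute-value estimate follows. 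The only delicate point is the map construction in the first step, and the decisive input there is simply that $R$ is reduced; I would rather build two maps than try to work with a single $\phi$, because the latter route would leave a $\Tor_{1}^{R}(R/I, \coker \phi)$ term that is not obviously controlled by Lemma~\ref{lem:lowerdim}.
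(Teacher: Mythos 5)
Your proof is correct and takes essentially the same approach as the paper: localize at the top-dimensional minimal primes, use the isomorphism $W^{-1}\Hom_R(M,N) \simeq \Hom_{W^{-1}R}(W^{-1}M,W^{-1}N)$ to produce $\phi \in \Hom_R(M,N)$ with $W^{-1}\phi$ an isomorphism, bound $\length_R(R/I\tensor_R N) - \length_R(R/I\tensor_R M)$ by $\length_R(R/I\tensor_R \coker\phi)$ via right-exactness of tensor, apply Lemma~\ref{lem:lowerdim} to the lower-dimensional module $\coker\phi$, and symmetrize. The paper's step of ``reversing the roles of $M$ and $N$'' is exactly your second map $\psi$; your extra observations about $\ker\phi$, $\ker\psi$ and about the unwanted $\Tor_1$ term are not strictly needed but do correctly explain why one works with cokernels and two maps rather than with a single comparison map.
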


\begin{proof}
Let $W = R \setminus (\cup_{i} P_{i})$, so that $W^{-1}R = R_{P_{1}}
\times \cdots \times R_{P_{m}}$ and we have $W^{-1}M \simeq W^{-1}N$.
Since $W^{-1} \Hom_{R}(M,N) = \Hom_{W^{-1}R}(W^{-1}M, W^{-1}N)$, there
is some $\phi \in \Hom_{R}(M,N)$ such that $W^{-1}\phi$ is an
isomorphism. Since $\coker(\phi)$ satisfies $W^{-1}\coker(\phi) = 0$ and
thus has dimension
strictly smaller than $d$, we can find a positive constant $C$
such that for all $e \in \N$ and any ideal $I$ of $R$ containing
$\m^{[p^{e}]}$ we have
\[
\left| \length_{R}(R/I \tensor_{R} \coker(\phi) )\right| \leq
Cp^{e(d-1)} \, \, .
\] The
sequence
\[
\xymatrix{R/I \tensor_{R} M \ar[r]^{\phi} & R/I \tensor_{R} N \ar[r] & R/I
\tensor_{R} \coker(\phi) \ar[r] & 0}
\]
is exact, so that
\[
\length_{R}(R/I \tensor_{R} N) - \length_{R}(R/I \tensor_{R} M) \leq
\length_{R}(R/I \tensor_{R} \coker(\phi)) \, \, .
\]
The lemma now follows by reversing the roles of $M$ and $N$ and
applying the preceding lemma.
\end{proof}

The next proposition is the key result from which both
the existence of Hilbert-Kunz multiplicity and $F$-signature will
follow.  The essential point is that the constant $C$ depends only on
the $R$-module $M$ in question.

\begin{proposition}
\label{prop:keyproposition}
  Suppose $(R,\m,k)$ is a $d$-dimensional reduced \mbox{$F$-finite} local ring of characteristic $p>0$.
  If $M$ is a finitely generated $R$-module, then there exists a
  positive constant $C$ such that for all $e, e' \in \N$ and any ideal $I$ of $R$ with $\m^{[p^{e}]}\subseteq I$ we have
\begin{equation}
\label{eq:4}
\left| \length_{R}\left(R/I \tensor_{R} M\right) -
  \frac{1}{p^{e'd}}\length_{R}(R/I^{[p^{e'}]} \tensor_{R} M)\right|
\leq Cp^{e(d-1)} \, \, .
\end{equation}
\end{proposition}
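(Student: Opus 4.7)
The plan is to reduce the proposition to the case $e'=1$ by telescoping over the exponents, and to dispatch that base case by applying Lemma~\ref{lem:second} to a judiciously chosen pair of $R$-modules whose generic ranks at top-dimensional minimal primes agree exactly.

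For the base case $e'=1$, I would use the identification $R/I \otimes_R F_*M \simeq F_*(R/I^{[p]} \otimes_R M)$ together with $\length_R(F_*N) = p^{\alpha(R)} \length_R(N)$ to rewrite
\[
\frac{1}{p^d} \length(R/I^{[p]} \otimes M) = \frac{1}{p^{d+\alpha(R)}} \length(R/I \otimes F_*M).
\]
At any minimal prime $P$ of $R$ with $\dim(R/P)=d$, reducedness forces $R_P$ to be a field, so $M_P \simeq R_P^{r_P}$ for some $r_P$. Consequently $(F_*M)_P \simeq F_*(R_P)^{r_P}$, an $R_P$-vector space of dimension $r_P \cdot p^{\alpha(R_P)} = r_P \cdot p^{d+\alpha(R)}$ by Kunz's Theorem~\ref{thm:kunz}(ii) applied to $P \subseteq \m$. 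Hence $(F_*M)_P \simeq (M^{\oplus p^{d+\alpha(R)}})_P$ as $R_P$-modules at every top-dimensional minimal prime $P$, and Lemma~\ref{lem:second} then produces a constant $C_1$ depending only on $M$ such that
\[
\left| \length(R/I \otimes M) - \frac{1}{p^d} \length(R/I^{[p]} \otimes M) \right| \leq C_1 p^{e(d-1)}
\]
for every $e \in \N$ and every ideal $I$ with $\m^{[p^e]} \subseteq I$.

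For general $e'$, observe that $I^{[p^{j+1}]} = (I^{[p^j]})^{[p]}$ and $\m^{[p^{e+j}]} \subseteq I^{[p^j]}$, so applying the base case to $I^{[p^j]}$ with $e$ replaced by $e+j$ and then dividing by $p^{jd}$ yields
\[
\left| \frac{1}{p^{jd}} \length(R/I^{[p^j]} \otimes M) - \frac{1}{p^{(j+1)d}} \length(R/I^{[p^{j+1}]} \otimes M) \right| \leq C_1 p^{e(d-1)} p^{-j}.
\]
Summing these telescoping differences from $j=0$ to $e'-1$ and invoking the triangle inequality finishes the proposition with constant $C := C_1 \cdot p/(p-1)$, which is independent of $e'$ and $I$.

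The main obstacle is the rank calculation in the base case. Everything hinges on Kunz's second theorem supplying the identity $\alpha(R_P) = \alpha(R) + d$, which provides an exact numerical match between the generic rank of $F_*M$ and the factor $p^{d+\alpha(R)}$ that arises when passing from $\length(R/I^{[p]} \otimes M)$ to $\length(R/I \otimes F_*M)$. Without this precise coincidence Lemma~\ref{lem:second} would not apply, and the rest of the argument — which is essentially bookkeeping with a geometric series — would have nothing to leverage.
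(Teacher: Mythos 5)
Your proposal is correct and takes essentially the same approach as the paper: both establish the one-step ($e'=1$) estimate by applying Lemma~\ref{lem:second} to $F_*M$ and $M^{\oplus p^{d+\alpha(R)}}$, with Kunz's identity $\alpha(R_{P_i}) = d+\alpha(R)$ providing the exact rank match at top-dimensional minimal primes, and then bootstrap to general $e'$. The paper phrases the bootstrap as an induction on $e'$ before normalizing, whereas you telescope the already-normalized differences and sum a geometric series, but these are the same computation and even yield the same constant.
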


\begin{proof}
If $P_{1}, \ldots, P_{m}$ are the minimal primes of $R$ with
$\dim(R/P_{i}) = d$ and $K_{i} = R_{P_{i}}$, then we have by
Theorem~\ref{thm:kunz} {\itshape (ii)} that $\alpha(R_{P_{i}})=d + \alpha(R)$.
We claim that $\oplus_{p^{d+\alpha(R)}} M$ and $F_{*}M$
are isomorphic after localizing at any of the $P_{i}$.  Since $K_{i}$
is a field, this follows from
\[
\length_{R_{P_{i}}}((F_{*}M)_{P_{i}}) =
\length_{R_{P_{i}}}(F_{*}(M_{P_{i}})) = p^{\alpha(R_{P_{i}})}
\length_{R_{P_{i}}}(M_{P_{i}}) \, \, .
\]
Thus, by
Lemma~\ref{lem:second}, there is a positive constant $D$ such that for
all $e \in \N$ and any ideal $I$ of $R$ containing $\m^{[p^{e}]}$ we have
\[
\left|   \length_{R}(R/I \tensor_{R} F_{*}M) -
  p^{d+\alpha(R)}\length_{R}(R/I \tensor_{R}M)
\right| \leq D p^{e(d-1)} \, \, .
\]
Since 
\[\length_{R}(R/I \tensor_{R} F_{*}M) =
\length_{R}\left(F_{*}\left(R/I^{[p]}\tensor_{R}M\right)\right) =
p^{\alpha(R)} \length_{R}(R/I^{[p]}\tensor_{R}M)
\]
we have, setting $E = Dp^{-\alpha(R)}$,
\[
\left| \length_{R}(R/I^{[p]} \tensor_{R} M) - p^{d} \length_{R}(R/I
  \tensor_{R} M) \right| \leq Ep^{e(d-1)}
\]
for all ideals $I$ of $R$ with $\m^{[p^{e}]} \subseteq I$.  Let us now
show by induction on $e' \in \N$ that
\begin{equation}
\label{eq:1}
\left| \length_{R}(R/I^{[p^{e'}]} \tensor_{R} M) - p^{e'd} \length_{R}(R/I
  \tensor_{R} M) \right| \leq Ep^{(e+e'-1)(d-1)}(1+p + \cdots + p^{e'-1})
\, \, .
\end{equation}
Indeed, we have
\begin{equation}
\label{eq:2}
\left| \length_{R}(R/I^{[p^{e'}]} \tensor_{R} M) - p^{d}
  \length_{R}(R/I^{[p^{e'-1}]} \tensor_{R} M) \right| \leq Ep^{(e+e'-1)(d-1)}
\end{equation}
since $\m^{[p^{e+e'-1}]} \subseteq I^{[p^{e'-1}]}$.  By the induction
assumption, we have
\begin{equation*}
  \left| \length_{R}(R/I^{[p^{e'-1}]} \tensor_{R} M) - p^{(e'-1)d} \length_{R}(R/I
  \tensor_{R} M) \right| \leq Ep^{(e+e'-2)(d-1)}(1+p + \cdots +
p^{e'-2})
\end{equation*}
and multiplying through by $p^{d} = p^{d-1}p$ gives
\begin{equation}
\label{eq:3}
  \left| p^{d}\length_{R}(R/I^{[p^{e'-1}]} \tensor_{R} M) - p^{e'd} \length_{R}(R/I
  \tensor_{R} M) \right| \leq Ep^{(e+e'-1)(d-1)}(p + \cdots +
p^{e'-1}) \, \, .
\end{equation}
Adding \eqref{eq:2} and \eqref{eq:3} together completes the induction
and yields \eqref{eq:1}.

To finish the proof, dividing \eqref{eq:1} through by $p^{e'd}$ shows that
\begin{eqnarray*}
\left| \length_{R}\left(R/I \tensor_{R}M\right) -
  \frac{1}{p^{e'd}}\length_{R}(R/I^{[p^{e'}]} \tensor_{R} M)\right| & \leq &
\frac{E\left(\sum_{i=0}^{e'-1}p^{i}\right)
  p^{(e+e'-1)(d-1)}}{p^{e'd}}
\\
& = &E
\cdot \frac{p^{e'} - 1}{p - 1} \cdot \frac{p^{e(d-1)}}{p^{e'}} \cdot
\frac{p^{(e'-1)(d-1)}}{p^{e'(d-1)}}\\ 
& \leq & \left( \frac{E}{(p-1)p^{d-1}}\right) p^{e(d-1)}
\end{eqnarray*}
so that $C := \left( \frac{E}{(p-1)p^{d-1}}\right)$ is a positive
constant satisfying the desired bound.
\end{proof}

\begin{corollary}
\label{cor:keylemmaanyring}
  Suppose $(R,\m,k)$ is any local ring of dimension $d$ and
  characteristic $p > 0$.  Then there is an $e_{0} \in \Z_{\geq 0}$ (depending
  only on $R$) with the following property:
 for every finitely generated $R$-module $M$,
  there exists a positive constant $C$ so that for all $e, e' \in \N$ and any ideal $I$ of $R$ with $\m^{[p^{e}]}\subseteq I$ we have
\begin{equation}
\label{eq:5}
\left| \length(R/I^{[p^{e_{0}}]} \tensor_{R} M) -
  \frac{1}{p^{e'd}}\length(R/I^{[p^{e'+e_{0}}]} \tensor_{R}M) \right|
\leq C p^{e(d-1)} \, \, .
\end{equation}
Furthermore, if $R$ is reduced
  and \mbox{$F$-finite}, we may take $e_{0} = 0$.
\end{corollary}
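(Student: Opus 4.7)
The plan is to derive Corollary~\ref{cor:keylemmaanyring} from Proposition~\ref{prop:keyproposition} by successively removing the extra hypotheses in that proposition: first completeness, then $F$-finiteness, and finally reducedness. The role of $e_{0}$ will be precisely to absorb the nilpotents.

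First, I would pass to the $\m$-adic completion $\widehat R$. Since $\length_{R}(N) = \length_{\widehat R}(N \otimes_{R} \widehat R)$ for any finite length $R$-module $N$, and $I^{[p^{e}]} \widehat R = (I\widehat R)^{[p^{e}]}$ for any ideal $I$, both sides of \eqref{eq:5} are preserved, so we may assume $R$ is complete. Next, I would pass to a faithfully flat local extension $R \hookrightarrow R^{\Gamma}$ via the Hochster--Huneke $\Gamma$-construction, chosen so that $R^{\Gamma}$ is a complete $F$-finite local ring with $\m R^{\Gamma}$ still the maximal ideal of $R^{\Gamma}$. Such an extension preserves lengths of $\m$-primary quotients (and of their tensor products with a fixed finitely generated $R$-module) and satisfies $(I R^{\Gamma})^{[p^{e}]} = I^{[p^{e}]} R^{\Gamma}$, so it suffices to prove \eqref{eq:5} for $R^{\Gamma}$, and we may assume $R$ is complete and $F$-finite.

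It remains only to handle non-reducedness, and this is what $e_{0}$ is for. Let $\mathfrak{a} = \mathrm{nil}(R)$; since $R$ is Noetherian, $\mathfrak{a}^{N} = 0$ for some $N$, so fix the least $e_{0} \in \Z_{\geq 0}$ with $p^{e_{0}} \geq N$, so that $\mathfrak{a}^{[p^{e_{0}}]} = 0$. The quotient $R_{\mathrm{red}} = R/\mathfrak{a}$ is then a reduced $F$-finite local ring of dimension $d$. Filter $M$ by the submodules $\mathfrak{a}^{i} M$, obtaining
\[
M \supseteq \mathfrak{a} M \supseteq \mathfrak{a}^{2} M \supseteq \cdots \supseteq \mathfrak{a}^{N} M = 0,
\]
where each quotient $\mathfrak{a}^{i} M / \mathfrak{a}^{i+1} M$ is naturally a finitely generated $R_{\mathrm{red}}$-module. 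By additivity of length along this filtration, for any ideal $I \supseteq \m^{[p^{e}]}$, both $\length_{R}(R/I^{[p^{e_{0}}]} \otimes_{R} M)$ and $\length_{R}(R/I^{[p^{e_{0}+e'}]} \otimes_{R} M)$ decompose as finite sums (over $i = 0, \ldots, N-1$) of terms
\[
\length_{R_{\mathrm{red}}}\bigl(R_{\mathrm{red}}/(IR_{\mathrm{red}})^{[p^{e_{0}+e'}]} \otimes_{R_{\mathrm{red}}} \mathfrak{a}^{i} M/\mathfrak{a}^{i+1} M\bigr),
\]
using that $I^{[p^{e_{0}+e'}]} R_{\mathrm{red}} = (I R_{\mathrm{red}})^{[p^{e_{0}+e'}]}$. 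Applying Proposition~\ref{prop:keyproposition} to $R_{\mathrm{red}}$ and each quotient $\mathfrak{a}^{i} M/\mathfrak{a}^{i+1} M$ with the ideal $(I R_{\mathrm{red}})^{[p^{e_{0}}]}$ (noting $(\m R_{\mathrm{red}})^{[p^{e+e_{0}}]} \subseteq (I R_{\mathrm{red}})^{[p^{e_{0}}]}$) produces, for each $i$, a bound of the form $C_{i} p^{(e+e_{0})(d-1)}$. Summing these $N$ bounds and absorbing the factor $p^{e_{0}(d-1)}$ yields the single constant $C$ satisfying \eqref{eq:5}. The ``furthermore'' clause is the trivial case $\mathfrak{a} = 0$, where $e_{0} = 0$ suffices and Proposition~\ref{prop:keyproposition} applies directly.

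The hardest step will be Step 2, the reduction to the $F$-finite case via the $\Gamma$-construction: although this is standard machinery, it requires care to verify that the extension simultaneously preserves lengths of Artinian modules and commutes with Frobenius powers in the way we need. Step 3 (the filtration argument) is cleaner in principle but still requires a careful identification of each summand as a length computation over $R_{\mathrm{red}}$ so that Proposition~\ref{prop:keyproposition} applies, and a tracking of the shift $e_{0}$ in the error exponent.
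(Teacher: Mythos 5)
Your first two reductions (pass to $\widehat R$, then to an $F$-finite faithfully flat extension with the same maximal ideal and perfect residue field) are fine and are essentially what the paper does, with the cosmetic difference that the paper builds the extension explicitly as $\widehat R \otimes_{k[[T_1,\dots,T_n]]} k^{\infty}[[T_1,\dots,T_n]]$ rather than invoking the $\Gamma$-construction by name. The gap is in the third step, where you handle the nilradical $\ba$ by filtering $M$ by $\ba^i M$ and then assert ``additivity of length along this filtration'' after tensoring with $R/J$ for $J = I^{[p^{e_0}]}$ and $J = I^{[p^{e_0+e'}]}$. Tensoring is only right exact, and length of $N \otimes_R R/J$ is \emph{not} additive along a filtration of $N$; concretely, the obstruction at step $i$ is the nonvanishing of $(\ba^i M \cap JM)/(\ba^{i+1}M + J\ba^iM)$, which is the image of a connecting $\Tor_1$ map. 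For example, take $R = k[[x,y]]/(x^2)$ with $\ba = (x)$, $p^{e_0}\ge 2$ so $\ba^{[p^{e_0}]}=0$, $J = \m^{[p^{e_0}]} = (y^{p^{e_0}})$, and $M = R^{\oplus 2}/R\cdot(y^2,-x)$. One computes $\length_R(M/JM) = 2p^{e_0}+2$, while the two graded pieces $\ba M$ and $M/\ba M$ contribute $p^{e_0}+2$ each after tensoring, for a total of $2p^{e_0}+4 \ne 2p^{e_0}+2$. So the equality you rely on simply fails, and without it the per-piece bounds from Proposition~\ref{prop:keyproposition} do not assemble into the claimed estimate. (One could try to bound the $\Tor$ discrepancy, but that is a further argument you have not supplied, and it is not obviously of order $p^{e(d-1)}$ in general.)

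The paper sidesteps this entirely with a Frobenius-pushforward device: choose $e_0$ so that $(\mathrm{Nil}\,S)^{[p^{e_0}]}=0$; then for any $S$-module the structure map $S \to F^{e_0}_*S$ factors through $S_{\mathrm{red}}$, so $F^{e_0}_*M$ is literally an $S_{\mathrm{red}}$-module, and one has the \emph{exact} length identities $\length_R(R/I^{[p^{e_0}]}\otimes_R M) = \length_{S_{\mathrm{red}}}(S_{\mathrm{red}}/IS_{\mathrm{red}}\otimes_R F^{e_0}_*M)$ and likewise with $I^{[p^{e'}]}$. Proposition~\ref{prop:keyproposition} is then applied once, to the reduced $F$-finite ring $S_{\mathrm{red}}$ and the single module $S_{\mathrm{red}}\otimes_R F^{e_0}_*M$, with no filtration and no error term to control. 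I would rewrite your step three along these lines: the role of $e_0$ is not to kill $\ba^N$ in a filtration of $M$, but to make $F^{e_0}_*(\blank)$ factor through $R_{\mathrm{red}}$.
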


{\renewcommand{\n}{\mathfrak{n}}

\begin{proof}
Let us first reduce to the case where $R$ is $F$-finite. After picking a
  coefficient field for $\widehat{R}$ and generators $x_{1}, \ldots,
  x_{n}$ for $\m$,
let $(S, \n, l)$ be the complete faithfully
flat\footnote{The ring extension $k[T_{1}, \ldots, T_{n}] \subseteq
  k^{\infty}[T_{1}, \ldots, T_{n}]$ is flat.  Since, by \cite[Chapter III \S 5]{Bourbaki1998} (\cf
  \cite[Theorem 49]{MatsumuraCommutativeAlgebra}), the
  flatness of a local homomorphism of local rings is
  preserved after completion, we see that $k[[T_{1}, \ldots, T_{n}]]
  \subseteq k^{\infty}[[T_{1}, \ldots, T_{n}]]$ is flat as well.  Thus, the
given extension $R \to S$ is faithfully flat since it is local, completion is
flat, and flatness is stable under arbitrary base change.} local ring extension of $(R, \m,
  k)$ with $\m S = \n$ and $l = l^{p}$ given by
\[
R \to \widehat{R} \to \widehat{R} \tensor_{k[[T_{1}, \ldots, T_{n}]]}
k^{\infty}[[T_{1}, \ldots, T_{n}]] =: S
\]
where $k[[T_{1}, \ldots, T_{n}]] \twoheadrightarrow \widehat{R}$ maps
$T_{i} \mapsto x_{i}$ and $l = k^{\infty}$ is a perfect closure of
$k$.
\renewcommand{\S}{S_{\mathrm{red}}}
If $S_{\mathrm{red}}= S / \mathrm{Nil}(S)$ where $\mathrm{Nil}(S)$ is
the nilradical of $S$,
  then $S_{\mathrm{red}}$ is reduced and \mbox{$F$-finite} of dimension $d$.
  Choose $e_{0} \gg 0$ so that
  $\left(\mathrm{Nil}(S)\right)^{[p^{e_{0}}]} = 0$.

If $\m^{[p^{e}]} \subseteq I \subseteq R$, then we have $(\n
S_{\mathrm{red}})^{[p^{e}]} \subseteq IS_{\mathrm{red}}$.  By Proposition~\ref{prop:keyproposition}, there is a positive constant
$C$ so that 
\[
\left| \length_{S_{\mathrm{red}}} (\S/I \S \tensor_{R} F^{e_{0}}_{*}M)
  - \frac{1}{p^{e'd}}  \length_{S_{\mathrm{red}}} (\S/I^{[p^{e'}]} \S
  \tensor_{R} F^{e_{0}}_{*}M) \right| \leq C p^{e(d-1)} \, \, .
\]
Since we have
\[
\length_{R}(R/I^{[p^{e_{0}}]} \tensor_{R} M) =
\length_{S_{\mathrm{red}}} (\S/I \S \tensor_{R} F^{e_{0}}_{*}M)
\]
\[
\length_{R}(R/I^{[p^{e'+e_{0}}]} \tensor_{R}M) = \length_{S_{\mathrm{red}}} (\S/I^{[p^{e'}]} \S
  \tensor_{R} F^{e_{0}}_{*}M)
\]
the desired result now follows.
\end{proof}
}

\begin{proof}[Proof of Theorem~\ref{thm:hkexists}]
  Fix $e_{0}$ and $C$ as in Corollary~\ref{cor:keylemmaanyring}, and
  $\epsilon > 0$. Find $e_{1} \gg 0$ so that $\m^{[p^{e_{1}}]}
  \subseteq I$.
  Choose $E \gg 0$ so that $\frac{C p^{e_{1}}}{p^{e_{0}d}p^{E}} <
  \epsilon$.  If $e \geq E$, then the estimate in \eqref{eq:5} for the ideal
  $I^{[p^{e}]} \supseteq \m^{[p^{e_{1}+e}]}$ and any $e' \in \N$ after dividing by
  $p^{(e+e_{0})d}$ yields
\[
\left| \frac{1}{p^{(e+e_{0})d}}\length_{R}(R/I^{[p^{e+e_{0}}]} \tensor_{R}
M) - \frac{1}{p^{(e'+e+e_{0})d}} \length_{R}(R/I^{[p^{e'+e+e_{0}}]}
\tensor_{R} M) \right|
\]
\[
\leq \frac{Cp^{(e+e_{1})(d-1)}}{p^{(e+e_{0})d}} =
\frac{Cp^{e_{1}(d-1)}}{p^{e_{0}d}p^{e}} \leq
\frac{Cp^{e_{1}(d-1)}}{p^{e_{0}d}p^{E}} < \epsilon
\]
In particular, this shows the sequence$\{ \frac{1}{p^{ed}}\length_{R}(R/I^{[p^{e}]}
\tensor_{R} M)\}_{e \in \N}$ is Cauchy.
\end{proof}

We view the next theorem as a kind of uniform approximation statement
for Hilbert-Kunz multiplicities.  The subsequent corollary is the
precise statement which will be needed to show the existence of the $F$-signature.

\begin{theorem}
\label{thm:approximation}
    Suppose $(R,\m,k)$ is any local ring of dimension $d$
  and characteristic $p>0$.
Then there is an $e_{0} \in \Z_{\geq 0}$ (depending
  only on $R$) with the following property:
 for every finitely generated $R$-module $M$,
  there exists a positive constant $C$ so that for all $e \in \N$ and any ideal $I$ of $R$ with $\m^{[p^{e}]}\subseteq I$ we have
\[
\left|\frac{1}{p^{e_{0}d}}\length_{R}(R/I^{[p^{e_{0}}]} \tensor_{R} M) - e_{HK}(I ; M) \right| \leq
Cp^{e(d-1)}
\]
Furthermore, if $R$ is reduced
  and \mbox{$F$-finite}, we may take $e_{0} = 0$.
\end{theorem}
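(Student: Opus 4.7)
The plan is to derive this statement as a direct consequence of Corollary~\ref{cor:keylemmaanyring} by passing to the limit $e' \to \infty$. The machinery has already been set up; all that remains is to interpret the bound in \eqref{eq:5} in light of the definition of Hilbert--Kunz multiplicity.

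First, I would fix the local ring $(R,\m,k)$ and choose $e_0$ exactly as supplied by Corollary~\ref{cor:keylemmaanyring}, taking $e_0 = 0$ in the reduced $F$-finite case. Given a finitely generated $R$-module $M$, the corollary provides a positive constant $C$ such that for all $e, e' \in \N$ and any ideal $I$ of $R$ with $\m^{[p^e]} \subseteq I$,
\[
\left| \length_R(R/I^{[p^{e_0}]} \tensor_R M) - \frac{1}{p^{e'd}} \length_R(R/I^{[p^{e'+e_0}]} \tensor_R M) \right| \leq C p^{e(d-1)}.
\]
Dividing this inequality by $p^{e_0 d}$ yields
\[
\left| \frac{1}{p^{e_0 d}} \length_R(R/I^{[p^{e_0}]} \tensor_R M) - \frac{1}{p^{(e'+e_0) d}} \length_R(R/I^{[p^{e'+e_0}]} \tensor_R M) \right| \leq \frac{C}{p^{e_0 d}} \, p^{e(d-1)}.
\]

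Next, I would let $e' \to \infty$ with $e$ held fixed. By Theorem~\ref{thm:hkexists}, the sequence $\{ \tfrac{1}{p^{nd}} \length_R(R/I^{[p^{n}]} \tensor_R M) \}_{n \in \N}$ converges to $e_{HK}(I;M)$, and its subsequence indexed by $n = e' + e_0$ has the same limit. Therefore the second term inside the absolute value converges to $e_{HK}(I; M)$, while the first term and the right-hand side are independent of $e'$. Taking the limit gives
\[
\left| \frac{1}{p^{e_0 d}} \length_R(R/I^{[p^{e_0}]} \tensor_R M) - e_{HK}(I; M) \right| \leq \frac{C}{p^{e_0 d}} \, p^{e(d-1)},
\]
and setting $C' := C/p^{e_0 d}$ yields the claimed estimate.

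There really is no obstacle here: the content of the theorem is already carried by Corollary~\ref{cor:keylemmaanyring}, whose proof is where the real work was done (reducing to the $F$-finite reduced case via the faithfully flat extension $R \to S_{\mathrm{red}}$ and invoking Proposition~\ref{prop:keyproposition}). The only thing to note is that the constant $C$ in the corollary depends only on $M$ (and on the fixed ring-dependent $e_0$), so the rescaled constant $C' = C/p^{e_0 d}$ likewise depends only on $M$, and the dependence of $e_0$ on $R$ alone is preserved.
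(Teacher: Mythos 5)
Your proposal is correct and follows essentially the same route as the paper's own proof: take $e' \to \infty$ in Corollary~\ref{cor:keylemmaanyring} and rescale by $p^{e_0 d}$. The only cosmetic difference is that you divide by $p^{e_0 d}$ before passing to the limit and identify the limit directly as $e_{HK}(I;M)$, whereas the paper first identifies the limit as $e_{HK}(I^{[p^{e_0}]};M)$ and then invokes the identity $e_{HK}(I^{[p^{e_0}]};M) = p^{e_0 d} e_{HK}(I;M)$; these are the same computation.
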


\begin{proof}
Letting $e' \to \infty$ in Corollary~\ref{cor:keylemmaanyring} gives
\[\left| \length(R/I^{[p^{e_{0}}]} \tensor_{R} M) -
  e_{HK}(I^{[p^{e_{0}}]};M) \right|
\leq C p^{e(d-1)} \, \, .\]
 After dividing
through by $p^{e_{0}d}$, the desired result now follows immediately from 
\[e_{HK}(I^{[p^{e_{0}}]};M) = p^{e_{0}d}e_{HK}(I;M)\] after replacing $C$ with $\frac{1}{p^{e_{0}d}}C$.
\end{proof}

\begin{corollary}
\label{cor:whatineed}
  Suppose $(R, \m, k)$ is a $d$-dimensional \mbox{$F$-finite} reduced
  local ring of characteristic $p >
  0$, $M$ is a finitely generated $R$-module, and $\{I_{e}\}_{e\in \N}$ is any sequence of ideals such that
  $\m^{[p^{e}]} \subseteq I_{e}$.  Then
\[
\lim_{e \to \infty} \frac{1}{p^{ed}}\left( \length_{R}(R/I_{e}
  \tensor_{R} M) -
e_{HK}(I_{e};M)\right)  = 0 \, \, .
\]
\end{corollary}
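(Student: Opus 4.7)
The plan is to reduce this corollary directly to Theorem~\ref{thm:approximation}, which has already packaged all of the uniform estimates needed. The key observation is that the theorem provides a bound of the form $Cp^{e(d-1)}$, and we are dividing by $p^{ed}$, so the discrepancy will decay like $1/p^{e}$.

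More precisely, since $R$ is reduced and $F$-finite, Theorem~\ref{thm:approximation} applies with $e_{0} = 0$. Applied to the given finitely generated $R$-module $M$, it yields a positive constant $C$ (depending only on $M$ and $R$, not on the choice of ideal) such that for every $e \in \N$ and every ideal $I \subseteq R$ with $\m^{[p^{e}]} \subseteq I$, we have
\[
\left| \length_{R}(R/I \tensor_{R} M) - e_{HK}(I;M) \right| \leq C p^{e(d-1)}.
\]
The hypothesis on our sequence is precisely that $\m^{[p^{e}]} \subseteq I_{e}$ for each $e$, so I can specialize $I = I_{e}$ in the above bound.

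Dividing through by $p^{ed}$ then gives
\[
\frac{1}{p^{ed}} \left| \length_{R}(R/I_{e} \tensor_{R} M) - e_{HK}(I_{e};M) \right| \leq \frac{C}{p^{e}},
\]
and the right-hand side tends to $0$ as $e \to \infty$. Therefore
\[
\lim_{e \to \infty} \frac{1}{p^{ed}}\left( \length_{R}(R/I_{e} \tensor_{R} M) - e_{HK}(I_{e};M)\right) = 0,
\]
as required.

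There is no real obstacle here; the work has all been done in Proposition~\ref{prop:keyproposition} and Theorem~\ref{thm:approximation}. The point of the corollary is simply to record the formulation in which it will be used for the $F$-signature argument (applied to $M = R$ and $I_{e}$ the ideal defined in Section~\ref{sec:f-signature}): the uniformity of $C$ in the ideal $I$ is exactly what allows us to substitute a sequence $I_{e}$ that varies with $e$, which is the step that standard Hilbert-Kunz existence arguments do not supply on their own.
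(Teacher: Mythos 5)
Your proof is correct and is exactly the intended argument: the paper states Corollary~\ref{cor:whatineed} as an immediate consequence of Theorem~\ref{thm:approximation} with $e_{0}=0$, and the only point of substance — that the constant $C$ is uniform in the ideal $I$, so it can be applied to a sequence $I_{e}$ varying with $e$ — is precisely what you highlight.
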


\begin{remark}
\label{rem:duttacraig}
The uniform Hilbert-Kunz estimates of this
section can also be shown using ideas
from~\cite{DuttaFrobMult}. More precisely,  suppose $R$ is an $F$-finite local domain of characteristic~$p>0$ with
  dimension $d$ having perfect residue field $k$.  Then there exists a
  positive constant $C$ and a finite set $\Lambda$  of nonzero prime ideals of
  $R$ with the following property:
  \begin{quote}
For all $e \in
  \N$ there are free $R$-modules $F_{e}$ and $G_{e}$ of rank $p^{ed}$
  together with inclusions
\[
F_{e} \subseteq R^{1/p^{e}} \subseteq G_{e}
\]
such that each of the quotients $G_{e} / R^{1/p^{e}}$ and
$R^{1/p^{e}}/F_{e}$ has a prime cyclic filtration by at most $C p^{ed}$
copies of the various $R/Q$ for $Q \in \Lambda$.
\end{quote}
A proof follows in the same manner as the solution to 
  \cite[Exercise 10.4]{HunekeTightClosureBook}, included in the appendix and due to Karen
Smith.  The author is grateful to Craig Huneke for pointing out the
relevance of this
exercise after viewing a preliminary version of this article.
\end{remark}





\section{$F$-Signature}
\label{sec:f-signature}

\subsection{Terminology and key lemmas}
\label{sec:notat-prel-lemm}

\begin{definition}
 Let $(R,\m,k)$ be a reduced \mbox{$F$-finite} local ring of prime characteristic
 $p>0$.   For each $e \in \N$, the \emph{$e$-th Frobenius splitting number of $R$} is 
the largest rank $a_{e} = a_{e}(R)$ of a free $R$-module appearing in a direct sum
decomposition of $F^{e}_{*}R$.  In other words, we may write $F^{e}_{*}R =
R^{ \oplus a_{e}} \oplus M_{e}$ where $M_{e}$ has no free
direct summands.  
\end{definition}

\begin{remark}
We have that $R$ is $F$-pure if and only if $a_{e} > 0$ for some $e
\in \N$, in which case $a_{e} > 0$ for all $e \in \N$. 
If $\widehat{R}$ is the $\m$-adic completion of $R$, then it follows
from $F^{e}_{*}\widehat{R} = \widehat{R} \tensor_{R} F^{e}_{*}R$ that
the $e$-th Frobenius splitting numbers of $R$ and $\widehat{R}$ coincide.
  Since $\widehat{R}$ satisfies the Krull-Schmidt condition
  \cite[Theorem 2.22]{SwanAlgebraicKtheoryLNM}, 
  a direct sum decomposition of $F^{e}_{*}\widehat{R}$ as in the above definition is unique
up to isomorphism.  As a result, the values $a_{e}$ for any
$F$-finite local ring are
independent of the given direct sum decomposition above.  Alternatively, Proposition~\ref{prop:aqandIe} below can be
seen as an elementary proof of this assertion.
\end{remark}

As indicated in the introduction, we aim to show that the
sequence $\{ \frac{a_{e}}{p^{e(d+\alpha(R))}} \}_{e \in \N}$
approaches a limit.  This will be done by applying the uniform
Hilbert-Kunz estimates from the previous section to the following collection of
naturally defined ideals.

\begin{definition}
Suppose $(R, \m, k)$ is an \mbox{$F$-finite} local ring of prime
characteristic $p>0$.
 For each $e \in \N$, we define $I_{e} = \{ r \in R \, | \, \phi(F^{e}_{*}r) \in \m \mbox{
    for all }\phi \in \Hom_{R}(F^{e}_{*}R, R) \}$.
\end{definition}

\begin{lemma}
\label{lem:neededpropsofIe}
 Suppose $(R,\m,k)$ is a reduced \mbox{$F$-finite} local ring with characteristic $p
 > 0$.  Then $I_{e}$ is an ideal containing
 $\m^{[p^{e}]}$.  Furthermore, we have $I_{e}^{[p]} \subseteq I_{e+1}$.
\end{lemma}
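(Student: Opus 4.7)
The plan is to handle the three assertions in order: $I_e$ is closed under addition and $R$-scaling, it contains $\m^{[p^e]}$, and $I_e^{[p]}\subseteq I_{e+1}$.

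First I would verify that $I_e$ is an ideal. Closure under addition is immediate from $R$-linearity of each $\phi \in \Hom_R(F^e_*R,R)$. The subtle point is closure under multiplication by $a\in R$, because the $R$-module structure on $F^e_*R$ satisfies $a\cdot F^e_*r = F^e_*(a^{p^e}r)$, not $F^e_*(ar)$. The trick here is to use the ring structure on $F^e_*R$: given $\phi\in\Hom_R(F^e_*R,R)$ and $a\in R$, define $\psi(x):=\phi(F^e_*a\cdot x)$ where the product is taken in the ring $F^e_*R$. Since $F^e_*R$ is commutative, this multiplication commutes with the induced $R$-action on $F^e_*R$, so $\psi$ is $R$-linear. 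Evaluating at $F^e_*r$ gives $\psi(F^e_*r)=\phi(F^e_*(ar))$, which is in $\m$ whenever $r\in I_e$, so $ar\in I_e$.

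Next I would show $\m^{[p^e]}\subseteq I_e$. A typical generator has the form $x^{p^e}$ with $x\in\m$. For any $\phi\in\Hom_R(F^e_*R,R)$, observe that $F^e_*(x^{p^e})=x\cdot F^e_*1$ under the $R$-module structure on $F^e_*R$, so $\phi(F^e_*(x^{p^e}))=x\,\phi(F^e_*1)\in\m$. Thus every generator lies in $I_e$, and since $I_e$ is an ideal, $\m^{[p^e]}\subseteq I_e$.

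Finally I would prove the inclusion $I_e^{[p]}\subseteq I_{e+1}$. The main idea is to exhibit a natural $R$-linear map $\iota\colon F^e_*R\to F^{e+1}_*R$ so that composition with any $\psi\in\Hom_R(F^{e+1}_*R,R)$ produces an element of $\Hom_R(F^e_*R,R)$ against which membership in $I_e$ can be tested. The natural choice is the $R$-linear Frobenius map $R\to F_*R$ sending $s\mapsto F_*s^p$; applying $F^e_*$ yields $\iota(F^e_*s)=F^{e+1}_*s^p$. Then for any $r\in I_e$ and $\psi\in\Hom_R(F^{e+1}_*R,R)$, we have $\psi(F^{e+1}_*r^p)=(\psi\circ\iota)(F^e_*r)\in\m$, so $r^p\in I_{e+1}$. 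Since $I_e^{[p]}$ is generated by such $r^p$ and $I_{e+1}$ is an ideal, the inclusion follows. The only genuine obstacle is being careful with the two distinct module structures on $F^e_*R$ (ring multiplication versus restriction-of-scalars $R$-action); once these are kept straight, each part reduces to a short calculation.
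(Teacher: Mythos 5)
Your proof is correct and takes essentially the same approach as the paper, which leaves the first two assertions to the reader and proves $I_e^{[p]}\subseteq I_{e+1}$ by restricting a splitting along the inclusion $R^{1/p^e}\subseteq R^{1/p^{e+1}}$ --- precisely your $R$-linear map $\iota$ written in $F_*$-notation. You simply spell out the routine verifications the paper omits.
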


\begin{proof}
  We leave it to the
  reader to verify that $I_{e}$ is an ideal of $R$, and also
  $\m^{[p^{e}]} \subseteq I_{e}$ for all $e \in \N$.  Suppose now
  $\phi \in \Hom_{R}(R^{1/p^{e+1}},R)$ and $r \in I_{e}$.  Then it
  follows
\[
\phi((r^{p})^{1/p^{e+1}}) = (\phi|_{R^{1/p^{e}}})(r^{1/p^{e}}) \in \m
\]
as $\phi|_{R^{1/p^{e}}} \in \Hom_{R}(R^{1/p^{e}},R)$ and $r \in
I_{e}$.  Thus, we see $r^{p} \in I_{e+1}$ as desired.
\end{proof}

\begin{proposition}\cite[Corollary 2.8]{AberbachEnescuStructureOfFPure}\cite[Lemma 2.1]{YaoObservationsAboutTheFSignature}
\label{prop:aqandIe}
Suppose $(R,\m,k)$ 
is an \mbox{$F$-finite} local ring with prime
characteristic $p>0$.  If $F^{e}_{*}R = R^{\oplus a_{e}} \oplus M_{e}$ is a
direct sum decomposition where $M_{e}$ has no free direct summands, then
\[
a_{e} = \length_{R}(F^{e}_{*}(R/I_{e})) = \length_{R}\left(
  \Hom_{R}(F^{e}_{*}R,R) / \Hom_{R}(F^{e}_{*}R, \m) \right)
\]
\end{proposition}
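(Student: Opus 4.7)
The plan is to fix a direct sum decomposition $\sigma\colon F^{e}_{*}R \xrightarrow{\sim} R^{\oplus a_{e}} \oplus M_{e}$ with $M_{e}$ having no free direct summands, and extract from it explicit coordinate descriptions of both $I_{e}$ and $\Hom_{R}(F^{e}_{*}R,\m)$. The essential preliminary input is the standard characterization: for a finitely generated module $N$ over the local ring $(R,\m)$, $N$ has no free direct summand if and only if $\psi(N) \subseteq \m$ for every $\psi \in \Hom_{R}(N, R)$. One direction is clear, since projection onto a free summand always hits units; for the converse, if $\psi(n) = u$ is a unit, then the map $R \to N$ sending $1 \mapsto u^{-1}n$ splits $\psi$ and produces a free direct summand of $N$.

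Applying this to $M_{e}$, every $\psi \in \Hom_{R}(M_{e}, R)$ lands in $\m$. Let $\pi_{1}, \ldots, \pi_{a_{e}} \in \Hom_{R}(F^{e}_{*}R, R)$ denote the projections onto the free summands under $\sigma$, and let $p_{M_{e}}$ denote the projection onto $M_{e}$. Any $\phi \in \Hom_{R}(F^{e}_{*}R, R)$ then decomposes uniquely as $\phi = \sum_{i} c_{i} \pi_{i} + \psi \circ p_{M_{e}}$ with $c_{i} \in R$ and $\psi \in \Hom_{R}(M_{e}, R)$. Writing $\sigma(F^{e}_{*}r) = (a_{1}(r),\ldots, a_{a_{e}}(r), m(r))$, the term $\psi(m(r))$ lies in $\m$ automatically, so the condition $\phi(F^{e}_{*}r) \in \m$ for every $\phi$ is equivalent, upon taking $\phi = \pi_{i}$, to $a_{i}(r) \in \m$ for each $i$. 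In other words, $\sigma$ identifies the $R$-submodule $F^{e}_{*}I_{e} \subseteq F^{e}_{*}R$ with $\m^{\oplus a_{e}} \oplus M_{e}$ inside $R^{\oplus a_{e}} \oplus M_{e}$.

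The first equality follows immediately from
\[
F^{e}_{*}(R/I_{e}) \;=\; F^{e}_{*}R / F^{e}_{*}I_{e} \;\cong\; (R^{\oplus a_{e}} \oplus M_{e}) / (\m^{\oplus a_{e}} \oplus M_{e}) \;\cong\; k^{\oplus a_{e}},
\]
so $\length_{R}(F^{e}_{*}(R/I_{e})) = a_{e}$. For the second equality I would run the same decomposition on the Hom side: $\Hom_{R}(F^{e}_{*}R, R) \cong R^{\oplus a_{e}} \oplus \Hom_{R}(M_{e}, R)$, and since every map $M_{e} \to R$ has image in $\m$, the subgroup $\Hom_{R}(F^{e}_{*}R, \m)$ corresponds precisely to $\m^{\oplus a_{e}} \oplus \Hom_{R}(M_{e}, R)$. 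Quotienting yields $k^{\oplus a_{e}}$ once more, of length $a_{e}$.

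The only step requiring genuine thought is the ``no free summand'' characterization of $M_{e}$; after that the proof is just careful bookkeeping with the Frobenius-twisted $R$-action on $F^{e}_{*}(\blank)$, and I do not anticipate any serious obstacle.
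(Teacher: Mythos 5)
Your argument is correct and is essentially the paper's own proof: both identify $F^{e}_{*}I_{e}$ with $\m^{\oplus a_{e}} \oplus M_{e}$ under the chosen decomposition using the fact that $M_{e}$ having no free summands forces every map $M_{e} \to R$ to land in $\m$, and then compute both lengths via the resulting quotients $\cong k^{\oplus a_{e}}$. The only cosmetic difference is that you spell out the characterization of ``no free direct summand'' explicitly, which the paper takes as known.
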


{\renewcommand{\v}{\check{v}}
\begin{proof}
  The assumption that $M_{e}$ has no free direct summands implies
  that $\phi(M_{e}) \subseteq \m$ for all $\phi \in
  \Hom_{R}(M_{e},R)$, or equivalently all $\phi \in
  \Hom_{R}(F^{e}_{*}R,R) = \Hom_{R}(R^{\oplus a_{e}},R) \oplus \Hom_{R}(M_{e},R)$.  It is easy to
  see from the definition of $I_{e}$ that
\[
F^{e}_{*}I_{e} = \m^{\oplus a_{e}} \oplus M_{e}
\]
and thus $F^{e}_{*}(R/I_{e}) = F^{e}_{*}R/F^{e}_{*}I_{e} \simeq
k^{\oplus a_{e}}$ has length $a_{e}$.  Similarly, we have
\begin{eqnarray*}
\Hom_{R}(F^{e}_{*}R, \m) & = &
\{ \phi \in \Hom_{R}(F^{e}_{*}R,R) \, | \,
\phi(F^{e}_{*}R)\subseteq \m \} \\ &=& \left(\m
  \Hom_{R}(R^{\oplus a_{e}},R)\right) \oplus \Hom_{R}(M_{e},R)
\end{eqnarray*}
so that also $\Hom_{R}(F^{e}_{*}R,R) / \Hom_{R}(F^{e}_{*}R, \m)
\simeq k \tensor_{R}  \Hom_{R}(R^{\oplus a_{e}},R)$ has
  length $a_{e}$.
\end{proof}
}

\begin{remark}
\label{rmk:injhull}
  The ideals $I_{e}$ appear
  in the works of Y. Yao
  \cite{YaoObservationsAboutTheFSignature} as well as those of I. Aberbach and
  F. Enescu \cite{AberbachEnescuStructureOfFPure}, albeit with a
  different formulation.  For completeness, let us recover their description, which will
  not be needed in the remainder of this article.  We assume $(R,\m,k)$ is
  \mbox{$F$-finite} and complete.  Let $E = E_{R}(k)$ denote the
  injective hull of the residue field, and $(\blank)^{\vee} =
  \Hom_{R}(\blank, E)$ the Matlis duality functor.  If $u \in E$ is a generator for
the socle, we have $k = Ru \subseteq E$. There are
  isomorphisms $E^{\vee} \simeq R$ and $(E/k)^{\vee} \simeq \m$,
  whereby the natural map $(E/k)^{\vee} \to E^{\vee}$ corresponds to
  the inclusion $\m \subseteq R$.  Thus, we have a commutative diagram
  of $F^{e}_{*}R$-modules
\[
\xymatrix{
\Hom_{R}(F^{e}_{*}R,\m) \ar[r] \ar[d]_{\phi} &  \Hom_{R}(F^{e}_{*}R,(E/k)^{\vee})
\ar[r] \ar[d]& (F^{e}_{*}R \tensor_{R} (E/k))^{\vee} \ar[d]^{\psi}\\
\Hom_{R}(F^{e}_{*}R,R) \ar[r] &  \Hom_{R}(F^{e}_{*}R,E^{\vee})
\ar[r] & (F^{e}_{*}R \tensor_{R} E)^{\vee} 
}
\]
where each of the vertical arrows is an inclusion, and the horizontal
arrows are isomorphisms.  By definition, $F^{e}_{*}I_{e} =
\Ann_{F^{e}_{*}R}(\coker(\phi)) = \Ann_{F^{e}_{*}R}(\coker(\psi))$, so
it follows that $F^{e}_{*}I_{e} =
\Ann_{F^{e}_{*}R}(\ker(\psi^{\vee}))$.  Since we have an exact sequence
\[
\xymatrix{
F^{e}_{*}R \tensor_{R} k \ar[r] & F^{e}_{*}R \tensor_{R} E \ar[r]^-{\psi^{\vee}} &
F^{e}_{*}R \tensor_{R} (E/k) \ar[r] & 0
}
\]
we recover the description
\[
I_{e} = \{ r \in R \, | \, F^{e}_{*}r \tensor u = 0 \mbox{ in }
F^{e}_{*}R \tensor_{R} E \} \, \, .
\]
\end{remark}

The following lemma was first observed by
  I. Aberbach and F. Enescu, and a simplified proof has been included
  for completeness.

\begin{lemma}\cite[Theorem 1.1]{AberbachEnescuStructureOfFPure} \cite[Remark 4.4]{SchwedeCentersOfFPurity}
\label{splittingprime}
  Suppose the local ring $(R,\m,k)$ is reduced and \mbox{$F$-finite}. The ideal
\[
P = \bigcap_{e\in \N} I_{e}
\]
is either prime or all of $R$, and is called the \emph{splitting
  prime} of $R$.
\end{lemma}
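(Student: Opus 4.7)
The statement is that $P := \bigcap_{e \in \N} I_e$ is either a proper prime ideal or equal to $R$. Since $P$ is the intersection of ideals it is automatically an ideal, so the real content is primality in the case $P \ne R$. My plan is to set up a convenient reformulation of non-membership: since $(R,\m,k)$ is local, an element $r \in R$ lies outside $I_e$ exactly when some $\phi \in \Hom_R(F^e_* R, R)$ satisfies $\phi(F^e_* r) \notin \m$, and rescaling $\phi$ by the unit $\phi(F^e_* r)^{-1}$ lets one arrange $\phi(F^e_* r) = 1$. Thus $r \notin P$ if and only if, for some $e \in \N$, there is a map $\phi \: F^e_* R \to R$ that ``splits $r$ off'' in the sense that $\phi(F^e_* r) = 1$.

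With this reformulation, I would argue by contradiction: assume $P \ne R$, that $ab \in P$, and that neither $a$ nor $b$ lies in $P$. Choose $e_1, e_2 \in \N$ and $\phi_i \in \Hom_R(F^{e_i}_* R, R)$ with $\phi_1(F^{e_1}_* a) = 1$ and $\phi_2(F^{e_2}_* b) = 1$, and compose
\[
\psi := \phi_2 \circ F^{e_2}_* \phi_1 \: F^{e_1+e_2}_* R \longrightarrow F^{e_2}_* R \longrightarrow R.
\]
The key computation is the identity $F^{e_2}_* b \cdot F^{e_1+e_2}_* a = F^{e_1+e_2}_*(a b^{p^{e_1}})$ in $F^{e_1+e_2}_* R = F^{e_2}_*(F^{e_1}_* R)$, which comes directly from unwinding the $F^{e_2}_* R$-module structure on $F^{e_1+e_2}_* R$. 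Using the $F^{e_2}_* R$-linearity of $F^{e_2}_* \phi_1$ together with $\phi_1(F^{e_1}_* a) = 1$, one computes
\[
\psi\bigl(F^{e_1+e_2}_*(a b^{p^{e_1}})\bigr) = \phi_2\bigl(F^{e_2}_* b \cdot F^{e_2}_* \phi_1(F^{e_1+e_2}_* a)\bigr) = \phi_2(F^{e_2}_* b) = 1.
\]
Hence $a b^{p^{e_1}} \notin I_{e_1+e_2}$, so $a b^{p^{e_1}} \notin P$. But $a b^{p^{e_1}} = b^{p^{e_1}-1} \cdot (ab)$ lies in the ideal $P$, a contradiction.

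The only real subtlety is bookkeeping: one must be careful about the asymmetry $a b^{p^{e_1}}$ rather than $ab$, which arises because the two splittings are applied in a particular order and one of them is pushed forward by $F^{e_2}_*$. The disjunction ``or all of $R$'' in the statement absorbs the non-$F$-pure case: when $R$ is not $F$-pure, no $\phi \: F^e_* R \to R$ sends $F^e_* 1$ to a unit, so $1 \in I_e$ for every $e$ and $P = R$; when $R$ is $F$-pure, $1$ is not in any $I_e$, so $P \ne R$ and the argument above applies.
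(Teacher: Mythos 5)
Your proof is correct, and the key mechanism (composing the two splittings to split off a product) is the same as the paper's, but you handle the resulting ``extra power'' differently. When you compose $\phi_2 \circ F^{e_2}_*\phi_1$, the element that gets split off is $ab^{p^{e_1}}$ rather than $ab$; you absorb this mismatch by noting that $ab^{p^{e_1}} = b^{p^{e_1}-1}(ab)$, so if $ab$ were in the ideal $P$ then so would $ab^{p^{e_1}}$, giving a contradiction. The paper instead corrects the mismatch inside the map itself: it premultiplies by the fractional power $c_1^{1/p^{e_1} - 1/p^{e_1+e_2}}$ before applying $\phi_2^{1/p^{e_1}}$, so that the composite $\phi_1 \circ \phi_2^{1/p^{e_1}}(c_1^{1/p^{e_1}-1/p^{e_1+e_2}} \cdot \blank)$ directly sends $(c_1 c_2)^{1/p^{e_1+e_2}} \mapsto 1$, showing $R \setminus P$ is multiplicatively closed by direct exhibition rather than by contradiction. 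Your route is slightly more economical in that it needs no corrective factor and only uses that $P$ is an intersection of ideals (hence an ideal), while the paper's route yields the splitting of $c_1 c_2$ itself explicitly, which is aesthetically cleaner and perhaps more reusable.
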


\begin{proof}
   Supposing $c_{1}, c_{2} \in R \setminus P$, we can find $\phi_{i} \in
  \Hom_{R}(R^{1/p^{e_{i}}},R)$ 
  for some $e_{1}, e_{2} \in \N$ with $\phi_{i}((c_{i})^{1/p^{e_{i}}})
  = 1$ for $i = 1,2$.  But then
\[
\phi := \phi_{1} \circ \phi_{2}^{1/p^{e_{1}}}\left(
c_{1}^{1/p^{e_{1}}-1/p^{e_{1}+e_{2}}}  \cdot ( \, \blank \, ) \right) \in
\Hom_{R}(R^{1/p^{e_{1}+e_{2}}}, R)
\]
satisfies $\phi((c_{1}c_{2})^{1/p^{e_{1}+e_{2}}}) = 1$ so that
$c_{1}c_{2} \in R \setminus P$.
\end{proof}

\begin{remark}
  \label{splittingprimeisstrongfreg}
It is immediate that $P \neq R$ precisely when $R$ is $F$-pure, and
$P = 0$ if and only if $R$ is strongly $F$-regular.  
  Furthermore, it is straightforward to
show $R/P$ is strongly $F$-regular \cite[Theorem
4.7]{AberbachEnescuStructureOfFPure}\cite[Corollary 7.8]{SchwedeCentersOfFPurity}. The prime $P$ is called the \emph{splitting
prime} of $R$, and can also be viewed as the unique largest center of
$F$-purity in $R$; see \cite[Remark 4.4]{SchwedeCentersOfFPurity}.
The \emph{$F$-splitting ratio} appearing below was first introduced in
\cite {AberbachEnescuStructureOfFPure}, though the equivalence of the
original definition with that which follows makes use of Theorem~\ref{thm:sdim}.
\end{remark}

\subsection{Theorem statements and proofs}
\label{sec:theor-stat-proofs}

\begin{theorem}
\label{thm:mainthm}
  Let $(R,\m,k)$ be a $d$-dimensional \mbox{$F$-finite} local ring  with prime characteristic
  $p > 0$ and $\alpha(R) = [k:k^{p}]$. Then the limit
\[
s(R) = \lim_{e \to \infty} \frac{a_{e}}{p^{e(d+\alpha(R))}} 
\]
exists and is called the \emph{$F$-signature of $R$}.  More generally,
if $P$ is the splitting prime of $R$, then 
the limit
\[
r_{F}(R) = \lim_{e \to \infty} \frac{a_{e}}{p^{e(\dim(R/P) + \alpha(R))}}
\]
exists and is called the \emph{$F$-splitting ratio of $R$}.
\end{theorem}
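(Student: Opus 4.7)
The plan is to combine Proposition~\ref{prop:aqandIe}, Corollary~\ref{cor:whatineed}, and the monotonicity $I_e^{[p]} \subseteq I_{e+1}$ from Lemma~\ref{lem:neededpropsofIe}. First, if $R$ is not $F$-pure then $a_e = 0$ for every $e$ and both limits trivially equal zero, so I may assume $R$ is $F$-pure; by Lemma~\ref{splittingprime} the splitting prime $P$ is then a genuine prime ideal, and $R/P$ is a reduced \mbox{$F$-finite} local domain of dimension $d' := \dim(R/P) \leq d$. Proposition~\ref{prop:aqandIe} together with the identity $\length_R F^e_* M = p^{e\alpha(R)} \length_R M$ gives $a_e = p^{e\alpha(R)} \length_R(R/I_e)$, so
\[
\frac{a_e}{p^{e(d' + \alpha(R))}} \;=\; \frac{\length_R(R/I_e)}{p^{ed'}},
\]
and the existence of $r_F(R)$ reduces to convergence of the right-hand side. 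The corresponding statement for $s(R)$ is an easy consequence: if $R$ is strongly $F$-regular then $P = 0$ and $d' = d$, whence $s(R) = r_F(R)$; otherwise $d' < d$, and the additional factor of $p^{-e(d-d')}$ forces $s(R) = 0$.

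Since $P = \bigcap_e I_e \subseteq I_e$, I identify $R/I_e$ with $(R/P)/(I_e/P)$, and the inclusion $\m^{[p^e]} \subseteq I_e$ descends to $(\m/P)^{[p^e]} \subseteq I_e/P$. Applying Corollary~\ref{cor:whatineed} to the reduced \mbox{$F$-finite} ring $R/P$, with $M = R/P$ and the sequence of ideals $\{I_e/P\}_{e\in\N}$, yields
\[
\lim_{e \to \infty} \frac{1}{p^{ed'}}\left( \length_R(R/I_e) \,-\, e_{HK}(I_e/P ;\, R/P) \right) = 0.
\]
Thus it suffices to show that the sequence $\bigl\{ p^{-ed'} e_{HK}(I_e/P;\, R/P)\bigr\}_{e\in\N}$ converges.

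For this final step I exploit monotonicity. From $I_e^{[p]} \subseteq I_{e+1}$ (Lemma~\ref{lem:neededpropsofIe}), reducing modulo $P$ gives $(I_e/P)^{[p]} \subseteq I_{e+1}/P$ inside $R/P$. Since $e_{HK}(-;R/P)$ reverses containment of $\m/P$-primary ideals and satisfies $e_{HK}(J^{[p]}; R/P) = p^{d'} e_{HK}(J; R/P)$ (both immediate from Theorem~\ref{thm:hkexists}), I obtain
\[
e_{HK}(I_{e+1}/P;\, R/P) \,\leq\, e_{HK}\bigl((I_e/P)^{[p]};\, R/P\bigr) \,=\, p^{d'} \cdot e_{HK}(I_e/P;\, R/P).
\]
Dividing by $p^{(e+1)d'}$ shows that $\bigl\{ p^{-ed'}\, e_{HK}(I_e/P;\, R/P)\bigr\}_{e\in\N}$ is a non-increasing sequence of non-negative real numbers, hence converges. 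I do not foresee any serious obstacle: the substantive analytic content is already packaged in Corollary~\ref{cor:whatineed}, and the only delicate point is the bookkeeping between $R$ and $R/P$ required to upgrade the statement for $s(R)$ to one for the splitting ratio $r_F(R)$.
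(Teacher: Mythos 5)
Your argument is correct in substance and uses the same key ingredients as the paper: the identity $a_e = p^{e\alpha(R)}\length_R(R/I_e)$ from Proposition~\ref{prop:aqandIe}, Corollary~\ref{cor:whatineed} to trade lengths for Hilbert--Kunz multiplicities, and the monotonicity $I_e^{[p]} \subseteq I_{e+1}$ from Lemma~\ref{lem:neededpropsofIe} to conclude that the normalized sequence of Hilbert--Kunz multiplicities is non-increasing. The structural difference is the order: you work in $R/P$ from the start, prove $r_F(R)$ exists, and then try to deduce $s(R)$; the paper proves $s(R)$ first by applying Corollary~\ref{cor:whatineed} directly in $R$ with $M = R$ and the sequence $\{I_e\}$, then obtains $r_F(R)$ by rerunning the identical argument on the ideals $\overline{I_e} = I_e \cdot (R/P)$.

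One step in your deduction of $s(R)$ from $r_F(R)$ is not justified: you assert that if $R$ is not strongly $F$-regular, i.e.\ $P \neq 0$, then $\dim(R/P) < d$. This is true, but it is not elementary at this stage of the paper --- it essentially rests on the Aberbach--Leuschke positivity theorem together with Theorem~\ref{thm:sdim}, which is a separate and later result. Fortunately you do not actually need it: regardless of whether $d' = d$ or $d' < d$, the convergence of $\frac{a_e}{p^{e(d'+\alpha(R))}}$ immediately gives the convergence of
\[
\frac{a_e}{p^{e(d+\alpha(R))}} = p^{-e(d-d')} \cdot \frac{a_e}{p^{e(d'+\alpha(R))}},
\]
being the product of a convergent sequence with the bounded sequence $p^{-e(d-d')}$ (which is $\equiv 1$ if $d' = d$ and tends to $0$ if $d' < d$). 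Rephrasing the casework as ``$d' = d$ gives $s(R) = r_F(R)$; $d' < d$ gives $s(R) = 0$'' closes the gap without invoking anything extra. The paper's ordering --- proving $s(R)$ directly in $R$ and only afterwards passing to $R/P$ for $r_F(R)$ --- sidesteps the subtlety entirely.
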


\begin{proof}
If $R$ is not $F$-pure, then $a_{e} = 0$ for all $e \in \N$ and
both statements are clear.  Thus, we may assume $R$ is $F$-pure and hence reduced.

Let us first show the existence of $F$-signature using only the two
properties of the ideals $I_{e}$ shown in Lemma~\ref{lem:neededpropsofIe}.
Since $\m^{[p^{e}]} \subseteq I_{e}$, it follows
from Corollary~\ref{cor:whatineed} that
\[
\lim_{e \to \infty} \left( \frac{1}{p^{ed}} \left(\length(R/I_{e}) -
    e_{HK}(I_{e};R)\right)\right) = 0 \, \, .
\]
From Proposition~\ref{prop:aqandIe}, we see that
\[
\frac{a_{e}}{p^{e(d+\alpha(R))}} = \frac{1}{p^{ed}} \length_{R}(R/I_{e})
\]
for all $e \in \N$.  Thus, to prove the existence of the \mbox{$F$-signature} it suffices to show the sequence $\{ \frac{1}{p^{ed}} e_{HK}(I_{e};R)
\}_{e\in \N}$ approaches a limit.  Since $I_{e}^{[p]} \subseteq
I_{e+1}$, we have 
\[
e_{HK}(I_{e+1};R) \leq e_{HK}(I_{e}^{[p]};R) = p^{d}
\left( e_{HK}(I_{e};R) \right)
\]
 and dividing through by $p^{(e+1)d}$ gives \[
\frac{1}{p^{(e+1)d}} e_{HK}(I_{e+1};R) \leq \frac{1}{p^{ed}} e_{HK}(I_{e};R) \] for all $e \in \N$.  Since
$\{ \frac{1}{p^{ed}} e_{HK}(I_{e};R)
\}_{e\in \N}$
 is non-increasing (and bounded below by zero), the desired conclusion
 follows at once.

More generally for the $F$-splitting ratio, let $\overline{R} = R/P$
and for any ideal $I \subseteq R$ set $\overline{I} =
I\cdot \overline{R}$.  Since
$\m^{[p^{e}]} \subseteq I_{e}$ and $I_{e}^{[p]} \subseteq I_{e+1}$, it
follows immediately that $\overline{\m}^{[p^{e}]} \subseteq
\overline{I_{e}}$ and $\overline{I_{e}}^{[p]} \subseteq
\overline{I_{e+1}}$.  Thus, the preceding argument applied to the
ideals $\overline{I_{e}}$ shows the existence of the limit
\[
\lim_{e \to \infty}
\frac{\length_{\overline{R}}(\overline{R}/\overline{I_{e}})}{p^{e(\dim(R/P))}}
= \lim_{e \to \infty} \frac{e_{HK}(\overline{I_{e}};
  \overline{R})}{p^{e(\dim(R/P))}} \, \, .
\]
Since $a_{e} =
p^{e(\alpha(R))}\length_{R}(R/I_{e}) =
p^{e(\alpha(R))}\length_{\overline{R}}(\overline{R}/\overline{I_{e}})$,
this limit is precisely the $F$-splitting ratio of $R$.
\end{proof}

\begin{remark}
  \label{rmk:notFfinite}
When $(R, \m, k)$ is 
not necessarily \mbox{$F$-finite}, Y. Yao \cite[Remark 2.3]{YaoObservationsAboutTheFSignature}
uses the description from \ref{rmk:injhull} to define the
\mbox{$F$-signature}.  However, the existence of the \mbox{$F$-signature} in this
setting immediately reduces to the \mbox{$F$-finite} case shown
in Theorem \ref{thm:mainthm}.  
\end{remark}

{
\renewcommand{\n}{\mathfrak{n}}
\begin{theorem}
\label{thm:sigineq}
   Let $(R, \m, k)$ be a
 $d$-dimensional $F$-finite characteristic $p > 0$ local
  domain and let $M$ be a finitely generated \mbox{$R$-module.} Denote by $b_{e}$ the maximal rank of a free
  \mbox{$R$-module} appearing in a direct sum decomposition of
  $F^{e}_{*}M$.  Then  
\[
\lim_{e \to \infty} \frac{b_{e}}{p^{e(d+\alpha(R))}} = \rank(M) \cdot
s(R) \, \, .
\]
\end{theorem}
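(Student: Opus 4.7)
The plan is to establish the sharper estimate $|b_e - r \cdot a_e| \leq C p^{e(d-1+\alpha(R))}$, where $r = \rank(M)$; dividing by $p^{e(d+\alpha(R))}$ and invoking Theorem~\ref{thm:mainthm} for $R$ then yields the claimed limit $\rank(M) \cdot s(R)$.  Using $M \tensor_R K \iso K^{\oplus r}$ where $K = \Frac(R)$, I would construct an $R$-linear map $\beta \: M \to R^{\oplus r}$ (by choosing a generic basis and clearing denominators) whose kernel $T$ is the torsion submodule of $M$ and whose cokernel $C$ satisfies $\dim C < d$.

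Mirroring the definition of $I_e \subseteq R$, I would define $J_e(M) := \{m \in M : \phi(F^e_* m) \in \m \text{ for all } \phi \in \Hom_R(F^e_* M, R)\}$.  The argument of Proposition~\ref{prop:aqandIe} adapts to yield $b_e(M) = p^{e\alpha(R)}\, \ell_R(M/J_e(M))$.  The heart of the argument is to verify the chain of inclusions
\[
T + I_e \cdot M \,\subseteq\, J_e(M) \,\subseteq\, \beta^{-1}(I_e \cdot R^{\oplus r}).
\]
The inclusion $T \subseteq J_e(M)$ holds since if $at = 0$ with $a \in R$ nonzero then $a \phi(F^e_* t) = \phi(F^e_*(a^{p^e} t)) = 0$, forcing $\phi(F^e_* t) = 0$ as $R$ is a domain.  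For $I_e \cdot M \subseteq J_e(M)$, observe that for any $m \in M$ and $\phi \in \Hom_R(F^e_* M, R)$, the rule $F^e_* s \mapsto \phi(F^e_*(sm))$ defines an element of $\Hom_R(F^e_* R, R)$ whose value at $F^e_* r$ is $\phi(F^e_*(rm))$, which lies in $\m$ when $r \in I_e$.  Finally $J_e(M) \subseteq \beta^{-1}(I_e R^{\oplus r})$ follows by precomposing each $\psi \in \Hom_R(F^e_* R^{\oplus r}, R)$ with $F^e_* \beta$ and invoking $J_e(R^{\oplus r}) = I_e R^{\oplus r}$.

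These inclusions sandwich
\[
r \ell_R(R/I_e) - \ell_R(C/I_e C) \,\leq\, \ell_R(M/J_e(M)) \,\leq\, \ell_R\bigl(R/I_e \tensor_R (M/T)\bigr),
\]
where the upper bound uses $M/(T + I_e M) \iso (M/T)/I_e(M/T)$ and the lower bound comes from the image of $M \tensor R/I_e \to R^{\oplus r}/I_e R^{\oplus r}$ (whose cokernel is $C/I_e C$).  Lemma~\ref{lem:lowerdim} applied to $C$ yields $\ell_R(C/I_e C) = O(p^{e(d-1)})$ since $\dim C \leq d-1$, while Theorem~\ref{thm:approximation} applied to $M/T$, combined with additivity of Hilbert-Kunz multiplicity on rank (so that $e_{HK}(I_e; M/T) = r \cdot e_{HK}(I_e; R)$, from the short exact sequence $0 \to M/T \to R^{\oplus r} \to C \to 0$), gives $\ell_R(R/I_e \tensor_R (M/T)) = r \ell_R(R/I_e) + O(p^{e(d-1)})$.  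Combining yields $|\ell_R(M/J_e(M)) - r \ell_R(R/I_e)| = O(p^{e(d-1)})$; scaling by $p^{e\alpha(R)}$ produces the desired $|b_e - r a_e| = O(p^{e(d-1+\alpha(R))})$, and the theorem follows upon dividing by $p^{e(d+\alpha(R))}$.

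The hard part will be carefully verifying the chain of inclusions in the second paragraph, particularly $I_e \cdot M \subseteq J_e(M)$ and $J_e(M) \subseteq \beta^{-1}(I_e R^{\oplus r})$, both of which require adjunction-style manipulation of the $R$-module structures on $F^e_* M$, $F^e_* R$, and $F^e_* R^{\oplus r}$.  Once these are secured, the remaining length estimates reduce to straightforward applications of the uniform Hilbert-Kunz tools of Section~\ref{sec:HK-mult} together with standard additivity properties of Hilbert-Kunz multiplicity.
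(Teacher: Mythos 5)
Your proof is correct and tracks the paper's argument closely in its core ingredients: both define the submodule $I_e^M = J_e(M)$, establish $I_eM \subseteq J_e(M)$ by the adjunction trick with $\phi(F^e_*(\,\blank\,\cdot\, m))$, identify $b_e = p^{e\alpha(R)}\length_R(M/J_e(M))$, and then invoke the uniform Hilbert--Kunz machinery (Lemma~\ref{lem:lowerdim}, Theorem~\ref{thm:approximation}/Corollary~\ref{cor:whatineed}, and the rank-additivity $e_{HK}(I_e;M) = \rank(M)\cdot e_{HK}(I_e;R)$) to conclude. The one genuine variation is in the auxiliary comparison map to a free module: the paper chooses a full-rank free submodule $G \subseteq M$ and an element $c$ with $cM \subseteq G$, deducing $I_e^M \subseteq (I_eM :_M c)$ and then bounding $\length(I_e^M/I_eM)$ by $\length(M/(I_eM + cM))$ via a four-term exact sequence; you instead go the other direction, fixing $\beta\colon M \to R^{\oplus r}$ with torsion kernel $T$ and low-dimensional cokernel $C$, and squeeze $\length(M/J_e(M))$ between $r\length(R/I_e) - \length(C/I_eC)$ and $\length(R/I_e \tensor M/T)$. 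The two constructions are dual to each other (free module into $M$ versus $M$ into free module); yours handles torsion explicitly rather than hiding it inside the multiplier $c$, and produces a two-sided sandwich against $r\length(R/I_e)$ directly rather than passing through $\length(M/I_eM)$. Both reduce to identical estimates, and the amount of work is essentially the same. One small streamlining available to you: in place of additivity of $e_{HK}$ over the short exact sequence, you could apply Lemma~\ref{lem:second} directly to $M/T$ and $R^{\oplus r}$ (they agree at the generic point), which bounds $|\length(R/I_e \tensor M/T) - r\length(R/I_e)|$ by $O(p^{e(d-1)})$ in one step and keeps the argument entirely within tools already stated in the paper.
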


\begin{proof}
Set $I_{e}^{M} = \{ m \in M \, | \, \phi(F^{e}_{*}m) \in \m \mbox{
    for all }\phi \in \Hom_{R}(F^{e}_{*}M, R) \}$.  Following the line
  of argument in Proposition~\ref{prop:aqandIe},  it is easy to see
  that $I_{e}^{M}$ is an $R$-submodule of $M$ with           
\[
b_{e} = \length_{R}(F^{e}_{*}(M/I_{e}^{M}) = p^{e \alpha(R)}
\length_{R}(M/I_{e}^{M}) \, \, .
\]
Furthermore, suppose $m \in M$ and $\phi \in
\Hom_{R}(F^{e}_{*}M,R)$.  Since $\phi( \blank \cdot F^{e}_{*}m ) \in
\Hom_{R}(F^{e}_{*}R, R)$, 
 we have that $\phi(F^{e}_{*}(rm)) \in \m$ for all $r \in I_{e}$.  Thus,
 $rm \in I_{e}^{M}$, and hence 
$I_{e}M \subseteq I_{e}^{M}$.

Let $G \subseteq M$ be a full rank free $R$-submodule, so that there
exists
$0 \neq c \in \Ann_{R}(M/G)$.  We will show $I_{e}^{M} \subseteq (I_{e}M
:_{M} c)$.  As $F^{e}_{*}G \simeq
(F^{e}_{*}R)^{\oplus \rank(M)}$, it is easy to see 
\[
\{ g \in G \, | \, \phi(F^{e}_{*}g) \in \m \mbox{
    for all }\phi \in \Hom_{R}(F^{e}_{*}G, R) \} = I_{e}G \simeq I_{e}^{\oplus \rank(M)}
  \, \, .
\]
Now, suppose we have $\phi \in \Hom_{R}(F^{e}_{*}G, R)$ and $m \in
I_{e}^{M}$.  It follows that $\phi( F^{e}_{*}(cm)) \in \m$ as
$\phi(F^{e}_{*}c \cdot \blank) \in
\Hom_{R}(F^{e}_{*}M,R)$, whence $cM \in I_{e}G \subseteq I_{e}M$.

Using the four term exact sequence
\[
\xymatrix{
0 \ar[r] &  (I_{e}M:_{M} c)/I_{e}M \ar[r] & M/I_{e}M
\ar[r]^-{\cdot c} & M/I_{e}M \ar[r] & M/(I_{e}M + cM) \ar[r] & 0
}
\]
we have that
\[
\length_{R}(I_{e}^{M}/I_{e}M ) \leq \length_{R}( (I_{e}M:_{M}
c)/I_{e}M ) =
\length_{R}(M/(I_{e}M + cM)) \, \, .
\]
Applying Lemma~\ref{lem:lowerdim} now gives
\[
\lim_{e \to \infty} \frac{1}{p^{ed}}(\length_{R}(M/I_{e}M) -
\length_{R}(M/I_{e}^{M})) = 0 \, \, .
\]
Furthermore, by Corollary~\ref{cor:whatineed} we have
\[
\lim_{e \to \infty} \frac{1}{p^{ed}}(\length_{R}(M/I_{e}M) -
e_{HK}(I_{e};M)) = 0 \, \, .
\]
Putting everything together, we now have 
\[
\begin{array}{ccc}
  \lim_{e \to \infty} \frac{b_{e}}{p^{e(d+\alpha(R))}} & = & \lim_{e
    \to \infty} \frac{1}{p^{ed}}\length_{R}(M/I_{e}^{M}) \\ & = & \lim_{e
    \to \infty} \frac{1}{p^{ed}}\length_{R}(M/I_{e}M)  \\
& = & \lim_{e
    \to \infty} \frac{1}{p^{ed}}e_{HK}(I_{e};M) \\ & = & \rank(M) \cdot \lim_{e
    \to \infty} \frac{1}{p^{ed}}e_{HK}(I_{e};R) \\ & = & \rank(M) \cdot s(R)
\end{array}
\]
as desired.
\end{proof}

\begin{remark}
  In the notation of the previous result, Y. Yao has proposed $\lim_{e\to \infty} \frac{b_{e}}{p^{e(d + \alpha(R))}}$
as a potential definition of
\mbox{$F$-signature} for a finitely generated \mbox{$R$-module}~$M$;
see \cite{YaoObservationsAboutTheFSignature}.
\end{remark}

The following Corollary extends 
\cite[Proposition 19]{HunekeLeuschkeTwoTheoremsAboutMaximal} by
removing the Gorenstein
hypothesis.  In particular, this result recovers explicit
formulae (first shown in \cite[Remark 4.7]{YaoObservationsAboutTheFSignature}) for the $F$-signatures of arbitrary finite quotient singularities.

\begin{corollary}
\label{cor:quotsing}
   Let $(R, \m, k) \subseteq (S, \n, l)$ be a local inclusion of
 $d$-dimensional $F$-finite characteristic $p > 0$ local
  domains with corresponding extension of fraction fields $K = \Frac(R)
  \subseteq \Frac(S) = L$.  Assume $S$ is a finitely generated
  $R$-module and write $S = R^{\oplus f}
  \oplus M$ where $M$ has no nonzero free direct summands.
%
  Then
\[
f \cdot s(S) \leq [L:K] \cdot s(R) \, \, .
\]
If additionally $S$ is regular, then equality holds and
\[
s(R) = \frac{f}{[L:K]}
\]
\end{corollary}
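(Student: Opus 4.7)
The strategy is to apply Theorem~\ref{thm:sigineq} to $S$ viewed as an $R$-module of rank $[L:K]$ and to compare the resulting limit with the $S$-module decomposition of $F^{e}_{*}S$. Before starting, I would verify that $\alpha(R) = \alpha(S)$: two applications of Theorem~\ref{thm:kunz}(ii) give $\alpha(K) = \alpha(R) + d$ and $\alpha(L) = \alpha(S) + d$, and comparing the two towers $K^{p} \subseteq K \subseteq L$ and $K^{p} \subseteq L^{p} \subseteq L$ via the identity $[L^{p} : K^{p}] = [L:K]$ (Frobenius is a field isomorphism onto its image) yields $[L:L^{p}] = [K:K^{p}]$, hence $\alpha(K) = \alpha(L)$.

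For the inequality, let $b_{e}$ denote the maximal rank of a free $R$-summand of $F^{e}_{*}S$; Theorem~\ref{thm:sigineq} then gives $\lim_{e \to \infty} b_{e} / p^{e(d+\alpha(R))} = [L:K] \cdot s(R)$. Next I would decompose $F^{e}_{*}S = S^{\oplus a_{e}(S)} \oplus N_{e}^{S}$ as $S$-modules and restrict scalars to $R$; using $S = R^{\oplus f} \oplus M$, this exhibits at least $f \cdot a_{e}(S)$ free $R$-summands of $F^{e}_{*}S$, so $b_{e} \geq f \cdot a_{e}(S)$. Dividing by $p^{e(d+\alpha(R))} = p^{e(d+\alpha(S))}$ and taking the limit yields the desired bound $[L:K] \cdot s(R) \geq f \cdot s(S)$.

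For the equality when $S$ is regular, Theorem~\ref{thm:kunz}(i) gives $F^{e}_{*}S \cong S^{\oplus p^{e(d+\alpha(S))}}$ as $S$-modules (in particular $s(S) = 1$), and restricting scalars to $R$ yields $F^{e}_{*}S \cong R^{\oplus f p^{e(d+\alpha(S))}} \oplus M^{\oplus p^{e(d+\alpha(S))}}$. The heart of the matter is to show that $M^{\oplus n}$ has no free $R$-summands for any $n \geq 1$, so that $b_{e} = f \cdot p^{e(d+\alpha(S))}$ exactly. Rather than invoke Krull--Schmidt (which would require passage to the completion), I would establish this via an intrinsic characterization of the number of free summands analogous to Proposition~\ref{prop:aqandIe}: for a finitely generated $R$-module $N$, the maximal rank of a free $R$-summand equals $\length_{R}(N/J_{N})$, where $J_{N} = \{n \in N : \phi(n) \in \m \text{ for all } \phi \in \Hom_{R}(N, R)\}$. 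Since the natural isomorphism $\Hom_{R}(M^{\oplus n}, R) \cong \Hom_{R}(M,R)^{\oplus n}$ forces $J_{M^{\oplus n}} = J_{M}^{\oplus n}$, the hypothesis $J_{M} = M$ (equivalent to $M$ having no free summands) gives $J_{M^{\oplus n}} = M^{\oplus n}$, so indeed $M^{\oplus n}$ has none either. Plugging back into the limit identity from Theorem~\ref{thm:sigineq} then yields $[L:K] \cdot s(R) = f$, so $s(R) = f/[L:K]$. I expect this last step — controlling the free $R$-summands of $M^{\oplus n}$ without Krull--Schmidt — to be the principal obstacle, and the $J_{N}$-characterization is the cleanest route I see to sidestep completion hypotheses.
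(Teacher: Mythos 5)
Your proposal is correct and follows essentially the same route as the paper: restrict an $S$-module decomposition $F^{e}_{*}S = S^{\oplus a_{e}(S)} \oplus N_{e}$ along $S = R^{\oplus f}\oplus M$ to $R$, compare the resulting free rank with $b_{e}$, and invoke Theorem~\ref{thm:sigineq} for $S$ as a rank-$[L:K]$ $R$-module. Two small differences are worth noting. For $\alpha(R)=\alpha(S)$ the paper uses the shorter residue-field tower identity $[l:l^{p}][l^{p}:k^{p}]=[l:k^{p}]=[l:k][k:k^{p}]$ together with $[l^{p}:k^{p}]=[l:k]$, whereas you pass through the fraction fields via Theorem~\ref{thm:kunz}(ii); both are fine, yours is a bit longer but equally valid. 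More substantively, you correctly identify that one must show $M^{\oplus n}$ has no free $R$-summand in order to conclude $b_{e} = f\cdot a_{e}(S)$ in the regular case, and your $J_{N}$-characterization argument handles this cleanly; the paper simply asserts ``equality holds,'' implicitly relying on the well-definedness of the maximal free rank (Remark~\ref{rmk:injhull}'s surrounding discussion, Proposition~\ref{prop:aqandIe}, or Krull--Schmidt after completion). Your explicit treatment is precisely the ``elementary'' route the paper gestures at in Remark 4.2, so the step you flagged as the principal obstacle is indeed a real gap in the paper's exposition and your filling of it is correct.
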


\begin{proof}
 Note first that $\alpha(R) = \alpha(S)$ since
$  [l:l^{p}][l^{p}:k^{p}] =[l:k^{p}]= [l:k][k:k^{p}]$.  As above,  let $b_{e}$ denote the maximal rank of a free
  $R$-module appearing in a direct sum decomposition of $F^{e}_{*}S$.
  If we write $F^{e}_{*}S = S^{\oplus a_{e}(S)} \oplus N_{e}$ as
  $S$-modules where $N_{e}$ has no free direct summands, we
  automatically get a direct sum decomposition of $F^{e}_{*}S$
\[
F^{e}_{*}S = (R^{\oplus f} \oplus M)^{\oplus a_{e}(S)} \oplus N_{e}
\]
as an $R$-module with a free direct summand of rank $f \cdot
a_{e}(S)$.  Thus, we have \mbox{$f \cdot a_{e}(S) \leq b_{e}$}.
Furthermore, if $S$ is regular, then $N_{e} = 0$ and equality holds.
Both statements now follow at once by dividing through by $p^{e(d+\alpha(R))} = p^{e(d+\alpha(S))}$ and
letting $e \to \infty$.
\end{proof}

We will need the following results from
\cite{HunekeLeuschkeTwoTheoremsAboutMaximal}; the proof given herein
is due to the original authors.

\begin{theorem}
\label{thm:HLformula}
\cite[Propositions 14 and 15]{HunekeLeuschkeTwoTheoremsAboutMaximal}  Suppose
$(R,\m,k)$ is an \mbox{$F$-finite} local ring with characteristic $p>0$.  If $I \subsetneq
J$ are two $\m$-primary ideals, then
\begin{equation}
\label{eq:6}
\frac{e_{HK}(I;R) - e_{HK}(J;R)}{
\length_{R}(J/I)} \geq s(R) \, \, .
\end{equation}
In particular, if $R$ is Cohen-Macaulay, it follows that
\[
(e(\m;R) - 1)(1 - s(R)) \geq e_{HK}(\m;R) - 1
\]
where $e(\m;R)$ is the Hilbert-Samuel multiplicity of $R$ along $\m$.
\end{theorem}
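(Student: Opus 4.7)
The plan is to exploit the decomposition $F^e_* R = R^{\oplus a_e} \oplus M_e$ together with the short exact sequence
$0 \to J^{[p^e]}/I^{[p^e]} \to R/I^{[p^e]} \to R/J^{[p^e]} \to 0$,
and then reduce the Cohen-Macaulay corollary to the resulting inequality by specializing $I$ to a minimal reduction of $\m$.

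First I would establish the key identification: for any ideal $K \subseteq R$, the $R$-submodule $K \cdot F^e_* R$ of $F^e_* R$ coincides with $F^e_*(K^{[p^e]})$, because the $R$-action on $F^e_* R$ sends $r \cdot F^e_* s$ to $F^e_*(r^{p^e} s)$. Applied to both $I$ and $J$, this gives the decomposition
\[
F^e_*(J^{[p^e]}/I^{[p^e]}) \;=\; (J \cdot F^e_* R)/(I \cdot F^e_* R) \;=\; (J/I)^{\oplus a_e} \oplus (JM_e/IM_e),
\]
and in particular $\length_R(F^e_*(J^{[p^e]}/I^{[p^e]})) \geq a_e \cdot \length_R(J/I)$. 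Using the identity $\length_R(F^e_* N) = p^{e\alpha(R)} \length_R(N)$ recalled in Section~\ref{sec:background-notation}, this rewrites as $\length_R(J^{[p^e]}/I^{[p^e]}) \geq (a_e/p^{e\alpha(R)}) \length_R(J/I)$. Dividing through by $p^{ed}$ and letting $e \to \infty$, the left side converges to $e_{HK}(I;R) - e_{HK}(J;R)$ by the displayed exact sequence and Theorem~\ref{thm:hkexists}, while the right side converges to $s(R) \cdot \length_R(J/I)$ by Theorem~\ref{thm:mainthm}. This yields the inequality \eqref{eq:6}.

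For the Cohen-Macaulay consequence, I would first ensure the residue field is infinite by passing to $R[t]_{\m R[t]}$ if necessary (noting that $s$, $e(\m;\cdot)$, and $e_{HK}(\m;\cdot)$ are all preserved by this faithfully flat extension). Choose a minimal reduction $\mathfrak{q} = (x_1, \ldots, x_d)$ of $\m$ generated by a system of parameters; since $R$ is Cohen-Macaulay, this is a regular sequence. Standard theory gives $\length_R(R/\mathfrak{q}) = e(\mathfrak{q};R) = e(\m;R)$, so $\length_R(\m/\mathfrak{q}) = e(\m;R) - 1$; moreover, the Frobenius powers $\mathfrak{q}^{[p^e]}$ are also generated by regular sequences, yielding $\length_R(R/\mathfrak{q}^{[p^e]}) = p^{ed} \length_R(R/\mathfrak{q})$ and hence $e_{HK}(\mathfrak{q};R) = e(\m;R)$. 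Plugging $I = \mathfrak{q}$ and $J = \m$ into \eqref{eq:6} gives $e(\m;R) - e_{HK}(\m;R) \geq s(R)(e(\m;R) - 1)$, and rearranging produces the displayed inequality.

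The main obstacle is setting up the key identification $K \cdot F^e_* R = F^e_*(K^{[p^e]})$ cleanly and then tracking the $p^{e\alpha(R)}$ factor arising from the change of module structure; once these bookkeeping issues are dispatched, the limit is routine and the Cohen-Macaulay case is a direct application. The only other subtlety is justifying the passage to infinite residue field, which is standard but depends on the invariance of the $F$-signature under the extension $R \to R[t]_{\m R[t]}$ via faithful flatness and the Krull-Schmidt properties recalled in Section~\ref{sec:f-signature}.
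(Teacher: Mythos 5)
Your proof is correct and follows essentially the same route as the paper: you track the length difference via $J^{[p^e]}/I^{[p^e]}$ where the paper subtracts $\length_R(R/J \otimes_R F^e_*R)$ from $\length_R(R/I \otimes_R F^e_*R)$ directly, but these are the same computation packaged slightly differently, and the minimal reduction argument for the Cohen--Macaulay case matches the paper's (which is just terser, citing Yao for the reduction to infinite residue field rather than spelling it out).
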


\begin{proof}
  If we write $F^{e}_{*}R = R^{\oplus a_{e}} \oplus M_{e}$ where
  $M_{e}$ has no direct summands, it follows that
\[
\length_{R}(R/I \tensor_{R} F^{e}_{*}R
) - \length_{R}(R/J \tensor_{R} F^{e}_{*}R) =a_{e}\length_{R}(J/I) +
\length_{R}(JM_{e}/IM_{e}) \geq a_{e} \length_{R}(J/I) \, \, .\]
Dividing through by $\length_{R}(J/I)p^{e(d+\alpha(R))}$ where $d =
\dim(R)$ and letting
$e \to \infty$ gives \eqref{eq:6}.  For the last statement, we may
assume additionally that $R$ is complete with infinite residue
field; see \cite[Remark 2.3(3)]{YaoObservationsAboutTheFSignature}.  Take $J =
\m$ and let $I$ be a minimal reduction of $\m$.  Since $e(\m;R) =
e_{HK}(I;R) = \length_{R}(R/I)$, the desired result follows from \eqref{eq:6}.
\end{proof}

The following result combines 
\cite[Proposition~14]{HunekeLeuschkeTwoTheoremsAboutMaximal}
(Theorem~\ref{thm:HLformula} above) and
Corollary~\ref{cor:quotsing} to
extend \cite[Proposition~19]{HunekeLeuschkeTwoTheoremsAboutMaximal} 
by once again removing the Gorenstein
hypothesis.

\begin{corollary}
  Let $S$ be an $F$-finite regular local ring of characteristic $p$
  and let $G$ be a finite group acting on $S$ with $p \not |  \, \,
  |G|$.  Set $R = S^{G}$ and write $S = R^{\oplus f} \oplus M$ as an
  $R$-module where
  $M$ has no free direct summands.  If $R$ is not regular, then
\[
|G| \geq \frac{f(e(R) -1)}{e(R) - e_{HK}(R)} \, \, .
\]
\end{corollary}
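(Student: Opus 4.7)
The strategy is to combine the previous Corollary (which computes $s(R)$ in terms of $f$ and $[L:K]$) with the Huneke–Leuschke inequality stated as Theorem~\ref{thm:HLformula}. The algebra will then reduce to a direct rearrangement.

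First I would verify the hypotheses needed to invoke the preceding results. Since $p \nmid |G|$, the Reynolds operator $\frac{1}{|G|} \sum_{g \in G} g$ provides an $R$-linear splitting of the inclusion $R \subseteq S$, so $S$ is a finitely generated $R$-module (by the usual argument using that $S$ is a finitely generated module over $S^G$ when $G$ is finite and $|G|$ is invertible — or more generally, in the present $F$-finite setting, this is a standard consequence). Moreover $\Frac(S)/\Frac(R)$ is a Galois extension with group $G$, so $[L:K] = |G|$. Since $S$ is regular and hence Cohen–Macaulay, and $R$ is a direct summand of $S$ as an $R$-module, $R$ is Cohen–Macaulay as well. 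Thus all the hypotheses of Corollary~\ref{cor:quotsing} and of Theorem~\ref{thm:HLformula} (second statement) are satisfied.

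Next, applying Corollary~\ref{cor:quotsing} with $S$ regular, I obtain
\[
s(R) = \frac{f}{[L:K]} = \frac{f}{|G|}.
\]
Then Theorem~\ref{thm:HLformula}, using that $R$ is Cohen–Macaulay, yields
\[
(e(\m;R) - 1)(1 - s(R)) \geq e_{HK}(\m;R) - 1,
\]
which after substituting $s(R) = f/|G|$ becomes
\[
(e(R) - 1)\left(1 - \frac{f}{|G|}\right) \geq e_{HK}(R) - 1.
\]
Rearranging gives
\[
(e(R) - 1) - (e_{HK}(R) - 1) \geq \frac{f(e(R) - 1)}{|G|},
\]
i.e. $e(R) - e_{HK}(R) \geq \frac{f(e(R) - 1)}{|G|}$.

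The last step is to divide by $e(R) - e_{HK}(R)$, which requires this quantity to be positive. Since $R$ is Cohen–Macaulay but not regular, the Hilbert–Samuel multiplicity strictly exceeds the Hilbert–Kunz multiplicity: indeed $e(\m;R) = e_{HK}(I;R) \geq e_{HK}(\m;R)$ for any minimal reduction $I$ of $\m$, and equality throughout would force $s(R) \geq 1$, contradicting that $R$ is not regular (by the characterization $s(R) = 1 \iff R$ regular alluded to in the introduction). Once this positivity is in hand, dividing through produces the desired inequality
\[
|G| \geq \frac{f(e(R) - 1)}{e(R) - e_{HK}(R)}.
\]
The only nonroutine point is this positivity check; everything else is direct substitution into the two previously established results.
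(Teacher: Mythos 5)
Your approach is exactly the one the paper intends: compute $s(R) = f/|G|$ via Corollary~\ref{cor:quotsing} (using that $S$ is regular) and substitute into the Cohen--Macaulay form of Theorem~\ref{thm:HLformula}, then rearrange. The preliminary checks---Reynolds operator giving the splitting $R \hookrightarrow S$ (hence $f \geq 1$ and $R$ Cohen--Macaulay as a direct summand of the Cohen--Macaulay ring $S$), and $[L:K] = |G|$ by Artin's theorem---are all standard and correct.

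The one slip is in the positivity check, which you yourself flag as the nonroutine point. Equality $e(R) = e_{HK}(R)$ would force $s(R) \leq 0$, not $s(R) \geq 1$: from inequality~\eqref{eq:6} with $J = \m$ and $I$ a minimal reduction of $\m$ (which is a proper containment $I \subsetneq \m$ since $R$ is not regular), one gets $0 = e_{HK}(I;R) - e_{HK}(\m;R) \geq s(R)\,\length_R(\m/I)$, whence $s(R) \leq 0$. The contradiction is then with strong $F$-regularity of $R$ (which holds since $R$ is a direct summand of the regular ring $S$), giving $s(R) > 0$ by Theorem~\ref{thm:bigsmeansregular}(iii). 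In fact no separate positivity argument is needed at all: once you rearrange to $e(R) - e_{HK}(R) \geq \frac{f(e(R)-1)}{|G|}$, the right-hand side is already strictly positive because $f \geq 1$ and $e(R) > 1$ (the latter since $R$ is Cohen--Macaulay but not regular), so the division is immediately legitimate. With that correction the argument is complete.
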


}

\subsection{Open questions and further remarks}

The following Theorem summarizes some further known properties of the
$F$-signature.  Related open questions will follow immediately thereafter.

\begin{theorem}
\label{thm:bigsmeansregular}
Suppose $(R,\m,k)$ is a $d$-dimensional \mbox{$F$-finite} local ring
with prime characteristic $p>0$.
\begin{enumerate}[(i)]
\item \cite[Corollary 16]{HunekeLeuschkeTwoTheoremsAboutMaximal}
We have $s(R) \leq 1$ with equality if and only if $R$ is regular
\item \cite[Theorem 3.1]{YaoObservationsAboutTheFSignature} If 
  $d \geq 2$, then $s(R) \geq
  1-\frac{1}{d!p^{d}}$ if and only if $R$ is regular.
\item \cite[Theorem 0.2]{AberbachLeuschke} We have $s(R) > 0$ if and only
  if $R$ is strongly $F$-regular.
\end{enumerate}
\end{theorem}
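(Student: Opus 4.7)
The plan is to establish each of the three statements in turn, relying on the machinery developed earlier in the paper, especially Theorem~\ref{thm:mainthm}, Theorem~\ref{thm:HLformula}, and the description $a_e = \length_R(R/I_e)$ from Proposition~\ref{prop:aqandIe}.

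For (i), the easy direction is immediate from Kunz's Theorem~\ref{thm:kunz}(i): if $R$ is regular, then $F^e_*R$ is free of rank $p^{e(d+\alpha(R))}$, so $a_e = p^{e(d+\alpha(R))}$ and $s(R) = 1$. For the inequality $s(R) \leq 1$, I would first reduce to the $F$-pure (hence reduced) case, then use a generic rank argument: at any minimal prime $P$ with $\dim(R/P) = d$, Theorem~\ref{thm:kunz}(ii) gives $\alpha(R_P) = d + \alpha(R)$, so $(F^e_*R)_P$ is free over the field $R_P$ of rank $p^{e(d+\alpha(R))}$, which bounds $a_e$. The equality case $s(R)=1 \Rightarrow R$ regular would follow from the Cohen-Macaulay corollary in Theorem~\ref{thm:HLformula}: after dévissage to the CM case, $s(R)=1$ forces $e_{HK}(\m;R) \leq 1$, and the Watanabe-Yoshida theorem identifies such rings as regular.

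For (ii), the refined lower bound, I would again invoke the HL inequality \eqref{eq:6} but chase the constants carefully. The strategy is to exhibit, in any non-regular $F$-finite local ring of dimension $d \geq 2$, a nested pair $I \subsetneq J$ of $\m$-primary ideals for which the ratio $\bigl(e_{HK}(I;R) - e_{HK}(J;R)\bigr)/\length_R(J/I)$ can be estimated from above by $1 - \frac{1}{d!p^d}$. Reducing to a one-dimensional non-regular situation via general hyperplane sections and computing Hilbert-Kunz multiplicities of powers of the maximal ideal directly should extract the $\frac{1}{d!p^d}$ gap factor.

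For (iii), the easy direction is: $s(R) > 0$ forces every $a_e > 0$, so $R$ is $F$-pure and thus reduced. If the splitting prime $P$ had $\dim(R/P) < d$, then by comparing $s(R)$ with the $F$-splitting ratio $r_F(R)$ in Theorem~\ref{thm:mainthm} we would get $s(R) = \lim r_F(R) \cdot p^{e(\dim(R/P) - d)} = 0$, a contradiction; so $\dim(R/P) = d$, and combined with the strong $F$-regularity of $R/P$ (Remark~\ref{splittingprimeisstrongfreg}) one deduces $P$ is contained in every minimal prime of $R$, giving strong $F$-regularity. The converse, that strong $F$-regularity implies $s(R) > 0$, is the main obstacle: the definitions provide for each $c \in R^0$ a splitting at some $e$, but no uniform control of how many splittings accumulate as $e \to \infty$. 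The Aberbach-Leuschke approach addresses this by using approximate Gorensteinness to produce, compatibly across Frobenius iterates, a controlled family of maps into the canonical module that translate into a uniform positive lower bound on $\frac{1}{p^{ed}}e_{HK}(I_e;R)$; I would follow that route, with the uniform Hilbert-Kunz estimates of Section~\ref{sec:HK-mult} available to streamline the error analysis.
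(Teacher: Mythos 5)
For part (i), your plan matches the paper's: Kunz gives the regular case, a generic-rank argument bounds $s(R) \leq 1$ (the paper localizes at the fraction field after first reducing to the strongly $F$-regular, hence domain, case; you localize at the minimal primes of top dimension, which works equally well), and the converse uses Theorem~\ref{thm:HLformula} together with Watanabe--Yoshida. Note that the ``d\'evissage to the CM case'' you mention is unnecessary here: once you know strong $F$-regularity (via part (iii)), $R$ is automatically Cohen--Macaulay, so the CM formula in Theorem~\ref{thm:HLformula} applies directly. The paper proves only (i) and the forward implication of (iii), deferring (ii) and the converse of (iii) to Yao and Aberbach--Leuschke respectively, and your sketches for those directions are appropriately non-committal, so there is nothing substantive to compare there.

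Your forward direction of (iii), however, takes a genuinely different and more roundabout route through the splitting prime $P$, and it contains an error. You claim that $\dim(R/P) = d$ together with strong $F$-regularity of $R/P$ lets one deduce ``$P$ is contained in every minimal prime of $R$.'' The containment goes the wrong way: the correct fact is that $P$ \emph{contains} every minimal prime. Indeed, if $c$ is a zerodivisor with $bc = 0$ for some $b \neq 0$, and $\phi \in \Hom_R(R^{1/p^e}, R)$ satisfied $\phi(c^{1/p^e}) = 1$, then $b = \phi(b\, c^{1/p^e}) = \phi((b^{p^e}c)^{1/p^e}) = 0$, a contradiction; so every zerodivisor of a reduced $F$-pure ring lies in $\bigcap_e I_e = P$. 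Since $P$ is prime and contains the union of all minimal primes, $P$ contains each one; combined with $\dim(R/P)=d$ this forces $P$ to be the unique minimal prime, so $R$ is a domain and $P=0$. Strong $F$-regularity of $R/P$ plays no role in this deduction. The paper avoids all of this by arguing directly: if $R$ is not strongly $F$-regular, fix $c \in R^0$ with $c \in I_e$ for all $e$; then $\m^{[p^e]} + \langle c\rangle \subseteq I_e$, and since $\dim(R/\langle c\rangle) < d$, Lemma~\ref{lem:lowerdim} gives $\length_R(R/I_e) = O(p^{e(d-1)})$, so $s(R) = 0$. This is cleaner and, conveniently, also sets up the reduction to a domain needed in the proof of (i).
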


\begin{proof} We show only \textit{(i)} and the forward direction of
  \textit{(iii)}, referring the reader to the references above for the remainder.

Assume first that $R$ is not strongly $F$-regular.  Then there exists
$c \in R$ not contained in any minimal prime such that $c \in I_{e}$
for all $e \in \N$.  Since $\m^{[p^{e}]} + \langle c \rangle
\subseteq I_{e}$, we have
\[
s(R) = \lim_{e \to \infty} \frac{a_{e}}{p^{e(d+\alpha(R))}} =
\lim_{e \to \infty} \frac{1}{p^{ed}}\length_{R}(R/I_{e}) \leq
\lim_{e\to\infty} \frac{1}{p^{ed}}\length_{R}(R/\m^{[p^{ed}]}
\tensor_{R} R/\langle c \rangle) = 0
\]
since $\dim(R/\langle c \rangle) < d$.  

Thus, we may assume $R$ is
strongly $F$-regular and hence a domain. 
Let $K = \Frac(R)$ and for
$e \in \N$ write $R^{1/p^{e}} =
R^{\oplus a_{e}} \oplus M_{e}$ where $M_{e}$ has no free direct
summands. Then it follows $K^{1/p^{e}} = K \tensor_{R} R^{1/p^{e}}= K^{\oplus a_{e}} \oplus
(K \tensor_{R} M_{e})$.  Since $[K^{1/p^{e}}:K] = p^{e(d+\alpha(R))}$, we must
have $a_{e} \leq p^{e(d+\alpha(R))}$ and the inequality $s(R) \leq 1$
follows at once.  In case $R$ is regular we have $s(R) = 1$ as $R^{1/p^{e}} = R^{\oplus
  p^{e(d+\alpha(R))}}$ so that $a_{e} = p^{e(d+\alpha(R))}$ for all $e
\in \N$.  Conversely, if $s(R) = 1$ it follows from
Theorem~\ref{thm:HLformula} that $e_{HK}(R) = 1$.  Thus, by
\cite{WatanabeYoshidaHKMultAndInequality}, $R$ is regular.  
\end{proof}

We now conclude by remarking on some of the remaining
open questions concerning the \mbox{$F$-signature}, as well as some forthcoming
results on the $F$-splitting ratio.  The first question, originally
posed by K.-i. Watanabe and K.-i. Yoshida,  is closely
related to the conjectured
  equivalence of strong and weak $F$-regularity.

\begin{question}
  \cite[Question 1.10]{WatanabeYoshidaMinimalHKmultiplicity} If the
  local ring
  $(R,\m,k)$ is reduced and \mbox{$F$-finite} with characteristic $p>0$, can one
  always find $\m$-primary ideals $I \subsetneq J$ such that equality
  holds in \eqref{eq:6}?
\end{question}

\begin{remark}
\label{rmk:minrelHKmult}
  It has been shown by \cite{YaoObservationsAboutTheFSignature} that
  the \mbox{$F$-signature} agrees with the notion of \emph{minimal relative
    Hilbert-Kunz multiplicity} as defined by
  \cite{WatanabeYoshidaMinimalHKmultiplicity}.
\end{remark}

The following question is motivated by
Theorem~\ref{thm:bigsmeansregular} {\itshape (ii)} and the analogous
open question for Hilbert-Kunz multiplicities; see 
\cite{BlickleEnescuOnRingsWithSmallHKMult} and \cite{WatanabeYoshidaHK3dimlocalrings} for further details.

\begin{question}
What is 
\[
\sigma(p,d) = \sup \{ \, s(R) \, |  \mbox{ $R$ is a $d$-dimensional characteristic
  $p$ non-regular local ring } \}
\]
for fixed $p > 0$ and $d$?  For which rings $R$, if any, is the
equality $s(R) = \sigma(p,d)$ achieved?
\end{question}

More generally still, one can ask:

\begin{question}
  What possible values can $s(R)$ attain as $R$ varies
over all local rings of fixed characteristic $p > 0$ and dimension
$d$?  In particular, is $s(R)$ necessarily rational?
\end{question}

\begin{remark}
  At present, all currently known computations of $F$-signature have
  proven to be rational.  These include finite quotient singularities
  (Theorem~\ref{cor:quotsing}), affine semigroup rings \cite{SinghFSignatureOfAffineSemigroup}, as well as
  Segre products and Veronese subrings of polynomial rings
  \cite{WatanabeYoshidaMinimalHKmultiplicity}.  However, a conjecture
  of P. Monsky would give an example of $F$-signature which is irrational.  
\end{remark}

\begin{proposition} Let $R = \mathbb{F}_{2}[[ x, y, z, u, v ]] /
  \langle uv + x^{3} + y^{3} + xyz \rangle$.
  Assuming \cite[Conjecture 1.5]{MonskyIrrHKMultConj}, it follows that
$
s(R) = \frac{2}{3} - \frac{5}{14 \sqrt{7}}
$.
\end{proposition}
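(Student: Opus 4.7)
The approach is to establish the identity
\[
s(R) + e_{HK}(\m_R; R) = 2 \, ,
\]
from which the claim follows by substituting Monsky's conjectured value of $e_{HK}(\m_R; R)$. First observe that $R = Q/(g)$ where $Q = \mathbb{F}_2[[x,y,z,u,v]]$ is regular local and $g = uv + x^3 + y^3 + xyz$ has initial form $uv$; thus $R$ is a four-dimensional Gorenstein hypersurface of Hilbert--Samuel multiplicity $e(\m_R; R) = 2$, and $g \notin \m_Q^{[2]} = (x^2, y^2, z^2, u^2, v^2)$ gives that $R$ is $F$-pure by Fedder's criterion. After passing to an infinite residue field (which preserves both $s$ and $e_{HK}$) and choosing a minimal reduction $I \subset \m_R$, Cohen--Macaulayness yields $\length_R(R/I) = 2$, so $\length_R(\m_R/I) = 1$ and Theorem~\ref{thm:HLformula} with $J = \m_R$ gives the upper bound $s(R) \leq 2 - e_{HK}(\m_R; R)$.

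The substantial step is the matching lower bound, i.e., equality in the Huneke--Leuschke inequality. Writing $F^e_*R = R^{\oplus a_e} \oplus M_e$ with $M_e$ having no free summands, the proof of Theorem~\ref{thm:HLformula} shows this equality reduces to the estimate $\length_R(\m_R M_e / IM_e) = o(p^{4e})$. Since $M_e$ is a maximal Cohen--Macaulay $R$-module (as a direct summand of the CM module $F^e_*R$), the associativity formula gives $\length_R(M_e / IM_e) = 2(p^{4e} - a_e)$, while $\length_R(M_e / \m_R M_e) = \mu(M_e) = \length_R(R/\m_R^{[p^e]}) - a_e$. Thus the required $o(p^{4e})$ estimate is equivalent to
\[
a_e + \length_R(R/\m_R^{[p^e]}) = 2 p^{4e} + o(p^{4e}) \, .
\]
For this specific hypersurface, I would verify this by combining Fedder's description ($I_e$ is the image in $R$ of the colon ideal $(\m_Q^{[p^e]} : g^{p^e-1}) \subset Q$) with the characteristic-$2$ Lucas-theorem expansion $g^{p^e-1} = \sum_{k=0}^{2^e - 1} (uv)^k f^{2^e-1-k}$ (where $f = x^3 + y^3 + xyz$). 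Using the topological $A$-module decomposition $R = A \oplus u A[[u]] \oplus v A[[v]]$ with $A = \mathbb{F}_2[[x,y,z]]$, one can compute both $\length_R(R/I_e)$ and $\length_R(R/\m_R^{[p^e]})$ to leading order by tracking the contributions coming from the $u$-part and the $v$-part symmetrically; any discrepancy is supported on a subvariety of dimension $< 4$ and can be absorbed via Lemmas~\ref{lem:lowerdim} and~\ref{lem:second}.

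Substituting Monsky's conjectured value $e_{HK}(\m_R; R) = \tfrac{4}{3} + \tfrac{5}{14\sqrt{7}}$ into the established identity yields
\[
s(R) = 2 - \frac{4}{3} - \frac{5}{14\sqrt{7}} = \frac{2}{3} - \frac{5}{14\sqrt{7}}
\]
as asserted. The principal obstacle is the $o(p^{4e})$ bound on $\length_R(\m_R M_e / IM_e)$ needed to force equality in Huneke--Leuschke; it is an essentially local statement about the non-free part of $F^e_*R$, and while the multiplicity-$2$ condition and the highly structured defining equation $uv + f$ in characteristic $2$ make the necessary estimates believable, a rigorous verification requires substantial bookkeeping with the Fedder description of $I_e$.
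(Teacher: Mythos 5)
Your overall strategy is correct, and you even note in passing the key structural fact (``four-dimensional Gorenstein hypersurface''), but you never actually use it, and as a result the central step --- equality in the Huneke--Leuschke inequality --- is left unfinished.

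Concretely: from Theorem~\ref{thm:HLformula} alone you only get the one-sided bound $s(R) \leq 2 - e_{HK}(\m)$, which you correctly observe. You then try to manufacture the reverse inequality by hand. Your reduction, via the four-term computation with $M_e$, to the estimate
\[
a_e + \length_R(R/\m^{[p^e]}) = 2p^{4e} + o(p^{4e})
\]
is algebraically valid but does not advance matters: since $a_e \sim s(R)p^{4e}$ and $\length_R(R/\m^{[p^e]}) \sim e_{HK}(\m)p^{4e}$, this estimate is \emph{equivalent} to the identity $s(R) + e_{HK}(\m) = 2$ you are trying to prove. You then propose to settle it by independently computing $a_e = \length_R(R/I_e)$ via the Fedder-type description of $I_e$, a Lucas-theorem expansion of $g^{p^e-1}$, and the $A$-module splitting $R = A \oplus uA[[u]] \oplus vA[[v]]$. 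That computation is never carried out; you acknowledge it ``requires substantial bookkeeping.'' As written, there is a genuine gap.

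The paper avoids all of this by exploiting the Gorenstein structure. Since $R$ is a hypersurface, hence Gorenstein, Huneke and Leuschke's Theorem 11(2) (not just Propositions 14--15 which give the inequality) provides the \emph{equality} $s(R) = e_{HK}(I) - e_{HK}(\langle I, u\rangle)$ whenever $I$ is a parameter ideal and $u$ generates the socle of $R/I$. Taking $I = \langle x, y, z, u+v\rangle$ gives $R/I \cong \mathbb{F}_2[[u]]/(u^2)$, so $e_{HK}(I) = \length_R(R/I) = 2$, the socle of $R/I$ is generated by $u$, and $\langle I, u\rangle = \m$. The formula then reads $s(R) = 2 - e_{HK}(\m)$ directly, no asymptotic estimates needed. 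You should replace the proposed computation with an appeal to this Gorenstein result; the rest of your argument (Cohen--Macaulayness, minimal reduction of length $2$, substituting Monsky's conjectured value) is fine.
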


\begin{proof}  By \cite[Corollary 2.7]{MonskyIrrHKMultConj}, Monsky's
  conjecture implies $e_{HK}(\m) = \frac{4}{3} + \frac{5}{14
    \sqrt{7}}$ where $\m = \langle x,y,z,u,v\rangle$. Now, consider the parameter ideal  $I = \langle x,y,z, u+v \rangle
  \subseteq R$.  Since $R$ is Cohen-Macaulay, it follows $e_{HK}(I) =
  \length_{R}(R/I) = 2$.  Further, the image of $u$ generates the socle in $R/I$, and we have
  $\langle I, u \rangle = \m$.  Thus, by the proof of \cite[Theorem 11
  (2)]{HunekeLeuschkeTwoTheoremsAboutMaximal}, we have
\[
s(R) = e_{HK}(I) - e_{HK}(\m) = 2 - \left(\frac{4}{3} + \frac{5}{14
    \sqrt{7}}\right) = \frac{2}{3} - \frac{5}{14 \sqrt{7}}
\]
as claimed.
\end{proof}

Another important open question regarding $F$-signature asks how
$F$-signature behaves after localization.  The following observation
is rather immediate: if $(R,\m,k)$ is an $F$-finite local ring and
$\mathfrak{p} \subset \m$ is a prime ideal, then $s(R_{\mathfrak{p}})
\geq s(R)$.  This follows simply by taking direct sum decompositions
of $R^{1/p^{e}}$ and localizing at $\mathfrak{p}$, and one is led  to the
following natural question.

\begin{question}
  \cite{EnescuYao} If $(R,\m,k)$ is an $F$-finite local ring, is the
  function
\[
P \in \Spec(R) \mapsto s(R_{P}) 
\]
is lower semicontinuous?
\end{question}

\begin{remark}
 F. Enescu and Y. Yao have shown that the $e$-th Frobenius splitting
 number function is lower semicontinuous \cite{EnescuYao}.
\end{remark}

Finally, when $R$ is $F$-pure but not strongly $F$-regular, one may
ask to what extent the $F$-splitting ratio $r_{F}(R)$ characterizes the asymptotic behavior of the sequence
of $F$-splitting numbers.  To that end, recall the following
definition from \cite{AberbachEnescuStructureOfFPure}.

\begin{definition}
  \cite{AberbachEnescuStructureOfFPure}
Let $(R, \m, k)$ be an \mbox{$F$-finite} local ring.  The
\emph{$s$-dimension} or Frobenius splitting dimension of $R$ is the
largest integer $k$ such that
\[
\liminf_{e \to \infty} \frac{a_{e}}{p^{e(k+\alpha(R))}}
\]
is not zero, where $a_{e}$ is the $e$-th Frobenius splitting number of $R$.
\end{definition}

\noindent
In future work in preparation \cite{BlickleSchwedeTuckerFsigPairs} together with M. Blickle and K. Schwede,
we are able to show the following.   This gives a positive answer to \cite[Question 4.9]{AberbachEnescuStructureOfFPure}

\begin{theorem}\cite{BlickleSchwedeTuckerFsigPairs}
\label{thm:sdim}
  Suppose $(R, \m, k)$ is a reduced local \mbox{$F$-finite} ring with
  characteristic $p > 0$.  Then the $s$-dimension of $R$ is equal to
  $\dim(R/P)$ where $P$ is the splitting prime of $R$.  In other
  words, if $R$ has an $F$-splitting,  then the $F$-splitting ratio $r_{F}(R)$ is always strictly positive.
\end{theorem}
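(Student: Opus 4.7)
The plan is to reduce to the strongly $F$-regular quotient $\overline{R} := R/P$ and compare its intrinsic $F$-signature with $r_F(R)$. By Remark~\ref{splittingprimeisstrongfreg}, $\overline{R}$ is strongly $F$-regular, so Theorem~\ref{thm:bigsmeansregular}(iii) gives $s(\overline{R}) > 0$. One may assume $R$ is $F$-pure, i.e.\ $P \neq R$; otherwise $a_e = 0$ for all $e$ and the claim is vacuous. Since $P = \bigcap_e I_e \subseteq I_e$, Proposition~\ref{prop:aqandIe} gives $a_e = \length_{\overline{R}}(\overline{R}/\overline{I_e})$, where $\overline{I_e} := I_e\overline{R}$. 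The inclusions $\overline{\m}^{[p^e]} \subseteq \overline{I_e}$ and $\overline{I_e}^{[p]} \subseteq \overline{I_{e+1}}$ descend directly from Lemma~\ref{lem:neededpropsofIe}, so the proof of Theorem~\ref{thm:mainthm} applies verbatim to yield
\[
r_F(R) = \lim_{e \to \infty} \frac{\length_{\overline{R}}(\overline{R}/\overline{I_e})}{p^{e\dim(R/P)}} = \lim_{e \to \infty} \frac{e_{HK}(\overline{I_e};\overline{R})}{p^{e\dim(R/P)}}.
\]
Writing $J_e := I_e(\overline{R})$ for the intrinsic analogue of $I_e$ defined inside $\overline{R}$, the same reasoning gives $s(\overline{R}) = \lim_e e_{HK}(J_e;\overline{R}) / p^{e\dim(R/P)} > 0$.

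The next step is to establish that $P$ is compatibly split: every $\phi \in \Hom_R(F^e_*R, R)$ satisfies $\phi(F^e_*P) \subseteq P$. Indeed, if $\phi(F^e_*p) \notin P$ for some $p \in P$, then by definition of $P$ there exist $e'$ and $\psi \in \Hom_R(F^{e'}_*R, R)$ with $\psi(F^{e'}_*\phi(F^e_*p)) = 1$; then the composite $\psi \circ F^{e'}_*\phi \in \Hom_R(F^{e+e'}_*R, R)$ sends $F^{e+e'}_*p \mapsto 1$, contradicting $p \in I_{e+e'}$. Consequently each $\phi$ descends to $\overline{\phi} \in \Hom_{\overline{R}}(F^e_*\overline{R}, \overline{R})$, giving a restriction map $\rho_e \colon \Hom_R(F^e_*R,R) \to \Hom_{\overline{R}}(F^e_*\overline{R}, \overline{R})$. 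One direction of comparison is easy: if $\overline{r} \in J_e$ and $r$ lifts it, then for every $\phi$ the induced $\overline{\phi}$ forces $\overline{\phi}(F^e_*\overline{r}) \in \overline{\m}$, hence $\phi(F^e_*r) \in \m + P = \m$, so $r \in I_e$ and $\overline{r} \in \overline{I_e}$. This shows $J_e \subseteq \overline{I_e}$ and hence only the upper bound $r_F(R) \leq s(\overline{R})$.

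The main technical obstacle is the asymptotic reverse estimate
\[
\length_{\overline{R}}(\overline{I_e}/J_e) = o\bigl(p^{e\dim(R/P)}\bigr),
\]
which combined with the easy inclusion forces $r_F(R) = s(\overline{R}) > 0$. This is essentially a bound on the cokernel of $\rho_e$: the defining condition for $\overline{I_e}$ uses only $\overline{\phi}$ in the image of $\rho_e$, while $J_e$ uses all $\overline{\phi} \in \Hom_{\overline{R}}(F^e_*\overline{R}, \overline{R})$. The strategy is to exploit strong $F$-regularity of $\overline{R}$: fix any lift $c \in R$ of a test element $\overline{c} \in \overline{R} \setminus \{0\}$; the idea is that every $\overline{\phi}$ becomes, after pre-composition with multiplication by a suitable power of $F^e_*c$, the restriction of some genuine $\phi \in \Hom_R(F^e_*R, R)$, with the resulting correction supported on $\overline{R}/(\overline{c})$, a module of dimension strictly less than $\dim(R/P)$. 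The uniform Hilbert--Kunz estimates of Section~\ref{sec:HK-mult}, applied to this lower-dimensional error module, then bound the length of the cokernel by $o(p^{e\dim(R/P)})$. An alternative route, taken in \cite{BlickleSchwedeTuckerFsigPairs}, is to set up a theory of $F$-signatures $s(\overline{R}, \Delta)$ for pairs and to directly identify $r_F(R)$ with $s(\overline{R}, \Delta)$ for the divisor $\Delta$ capturing the Frobenius-trace data of the extension $R \to \overline{R}$; positivity then reduces, by the pair analogue of Aberbach--Leuschke, to strong $F$-regularity of $\overline{R}$, bypassing the explicit cokernel estimate entirely.
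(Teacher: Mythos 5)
The paper does not prove this theorem: it is stated as a citation to the forthcoming work \cite{BlickleSchwedeTuckerFsigPairs}, so there is no in-paper proof to compare against. Evaluating your sketch on its own merits, the setup is correct and the easy half is solid. Your observation that $P$ is compatibly split under every $\phi\in\Hom_R(F^e_*R,R)$, the induced restriction $\rho_e\colon\Hom_R(F^e_*R,R)\to\Hom_{\overline R}(F^e_*\overline R,\overline R)$, the identification $r_F(R)=\lim_e\length_{\overline R}(\overline R/\overline{I_e})/p^{e\dim(R/P)}$, and the inclusion $J_e\subseteq \overline{I_e}$ giving $r_F(R)\le s(\overline R)$ are all verified correctly from the lemmas in the paper.

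The genuine gap is in the claimed reverse estimate $\length_{\overline R}(\overline{I_e}/J_e)=o(p^{e\dim(R/P)})$, which you flag yourself as ``the main technical obstacle'' but do not actually establish. Your sketch asserts that a single element $c$ works so that $\overline\phi(F^e_*\overline c\cdot\blank)\in\im(\rho_e)$ for \emph{every} $\overline\phi\in\Hom_{\overline R}(F^e_*\overline R,\overline R)$ and \emph{every} $e$; this is a uniform-in-$e$ bound on $\coker(\rho_e)$ by a fixed annihilator, and strong $F$-regularity of $\overline R$ does not hand this to you. Moreover, if such a $c$ existed your argument would give $r_F(R)=s(R/P)$ exactly, which is a strictly stronger statement than the theorem asserts and is not expected to hold in general: the restriction map $\rho_e$ need not be close to surjective, and the whole point of the Cartier-algebra framework in \cite{BlickleSchwedeTuckerFsigPairs} is that $r_F(R)$ is the $F$-signature of $\overline R$ relative to the \emph{proper} subalgebra $\mathcal D=\bigoplus_e\im(\rho_e)$, not relative to the full $\Hom_{\overline R}(F^e_*\overline R,\overline R)$. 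What is actually used for positivity is that $\overline R$ is strongly $F$-regular \emph{with respect to $\mathcal D$} (which does follow from $P$ being the splitting prime, by the same contradiction argument you use to show compatibility), together with a pair/Cartier-algebra analogue of Aberbach--Leuschke. You name this ``alternative route'' correctly as the one taken in \cite{BlickleSchwedeTuckerFsigPairs}, but it requires developing the pair $F$-signature and its positivity criterion from scratch, which your proposal does not do. So the proof as written does not close.
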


\bibliographystyle{skalpha}
\bibliography{CommonBib}

\end{document}